\documentclass{amsart}
\usepackage{amsmath}
\usepackage{graphicx}
\usepackage{amsfonts}
\usepackage{amssymb}

\setcounter{MaxMatrixCols}{10}

\newtheorem{theorem}{Theorem}
\theoremstyle{plain}

\newtheorem{corollary}{Corollary}

\newtheorem{definition}{Definition}
\newtheorem{example}{Example}

\newtheorem{proposition}{Proposition}
\newtheorem{remark}{Remark}

\numberwithin{equation}{section}

\begin{document}

\author{Manuel Rivera and Samson Saneblidze}

\newcommand{\Addresses}{{
  \bigskip
  \footnotesize

   \textsc{Manuel Rivera, Department of Mathematics, University of Miami, 1365 Memorial Drive, Coral
Gables, FL 33146 and
Departamento de Matem\'aticas, Cinvestav, Av. Instituto Polit\'ecnico Nacional 2508, Col. San Pedro Zacatenco, M\'exico, D.F. CP 07360, M\'exico} \par\nopagebreak
  \textit{E-mail address} \texttt{manuelr@math.miami.edu}

  \medskip
  \medskip

  \textsc{Samson Saniblidze,  A. Razmadze Mathematical Institute
I.Javakhishvili Tbilisi State University
6, Tamarashvili  st.,
 Tbilisi 0177, Georgia}
 \par\nopagebreak
  \textit{E-mail address} \texttt{sane@rmi.ge }

}}

\begin{abstract}
We introduce the abstract notion of a necklical set in order to describe a functorial combinatorial model of the path fibration over the geometric realization of a path connected simplicial set. In particular, to any path connected simplicial set $X$ we associate a necklical set $\widehat{\mathbf{\Omega}}X$ such that its geometric realization $|\widehat{\mathbf{\Omega}}X|$, a space built out of gluing cubical cells, is homotopy equivalent to the based loop space on $|X|$ and the differential graded module of chains $C_*(\widehat{\mathbf{\Omega}}X)$ is a differential graded associative algebra generalizing Adams' cobar construction.
\end{abstract}

\title[A combinatorial model for the path fibration]{A combinatorial model for the path fibration}
\maketitle

 \section{Introduction}

 The cobar construction, as introduced by Adams in his celebrated paper \cite{Adams}, describes a functorial way of producing a differential graded associative algebra (dg algebra) from a connected differential graded coassociative coalgebra (dg coalgebra). Adams proved that such a construction models the passage that starts with the dg coalgebra of chains on a simply connected topological space $Y$ and goes to the dg algebra of chains on the based (Moore) loop space $\Omega Y$ of $Y$. More precisely, if $(\operatorname{Sing}^1_n(Y,y), \partial, \Delta)_{n\geq 0},$
    denotes the connected dg coalgebra (over a fixed ring $\Bbbk$) of singular chains in $Y$ with edges collapsed to a fixed point $y \in Y$, then the cobar construction on the quotient dg coalgebra  $\operatorname{Sing}^1(Y):=(\operatorname{Sing}^1_*(Y,y) /  \operatorname{Sing}^1_{*>0}(y) , \partial, \Delta)$
    is a dg algebra quasi-isomorphic to the normalized singular cubical chains of $\Omega Y.$
      A motivation of the cobar construction was the Adams-Hilton model of $\Omega Y$ \cite{Adams- Hilton}, which in turn generalizes the James model of $\Omega \Sigma X $. In all models the simply connected hypothesis was assumed
 in order to apply a spectral sequence comparison theorem to prove
the statements.

S. Saneblidze and T. Kadeishvili showed in \cite{Saneblidze- Kadeishvili} that the cobar construction on
        $\operatorname{Sing}^1(Y)$
        is isomorphic to the chain complex associated to a cubical set model (without degeneracies)  of $\Omega Y$. On the other hand, M. Rivera and M. Zeinalian proved in \cite{Rivera- Zeinalian} that for any connected space $Y$ the cobar construction of $\operatorname{Sing}^N(Y,y)$, the dg coalgebra of the normalized singular chains with vertices at $y$, yields a dg algebra quasi-isomorphic to the singular chains of $\Omega Y$ and, moreover, such a dg algebra is isomorphic to the normalized chains associated to certain cubical set with ``connections", a notion introduced in \cite{Brown- Higgins}. This statement is also true if we take the cobar construction on the dg coalgebra of chains associated to any Kan complex model of $Y$. The proof relied on some basic results from the theory of $\infty$-categories and on classifying morphisms in the category of \textit{necklaces}, i.e. simplicial sets of the form $\Delta^{n_1} \vee ... \vee \Delta^{n_k}$ where $\Delta^{n_i}$ is a standard simplex of dimension $n_i$ and each wedge $\Delta^{n_i} \vee \Delta^{n_i+1}$ is obtained by identifying the last vertex of $\Delta^{n_i}$ with the first vertex of $\Delta^{n_i+1}$. Necklaces were introduced by D. Dugger and D. Spivak in \cite{Dugger- Spivak} to describe the mapping spaces of the \textit{rigidification} functor from simplicial sets to simplicial categories, the right adjoint of the homotopy coherent nerve functor. In \cite{Rivera- Zeinalian} the relationship between the mapping spaces of the rigidification functor, the cobar construction of the dg coalgebra of normalized chains of a Kan complex model for a space, and the based loop space is studied in detail.

In this article we use
 construct a quasi-fibration $|\widehat{\mathbf{\Omega}}X| \xrightarrow{\iota} |\widehat{\mathbf{P}} X| \xrightarrow{\xi} Y$ modeling the path fibration
 $\Omega Y \rightarrow PY \rightarrow Y$, where $Y=|X|$ is the geometric realization of a path connected simplicial set $X,$
 while $\widehat{\mathbf{P}} X$ and $\widehat{\mathbf{\Omega }} X$
  are abstract  sets whose modeling polytopes are the standard cubes indexed by necklaces inside $X$. Since the restriction of $\xi$ to $|\widehat{\mathbf{P}} X^1|\rightarrow |X^1|$  is a covering ($X^1$ is the $1$-skeleton of $X$), this model can be thought of as an extension of the Cayley covering on the wedge of circles.

 More precisely, $\widehat{\mathbf{\Omega}}X$ and  $\widehat{\mathbf{P}} X$ are \textit{necklical sets},
  i.e. presheaves over certain categories of necklaces. Necklical sets may be thought of as cubical sets equipped with a particular set of degeneracies making them lie somewhere in between classical cubical sets and cubical sets with connections. We prove using basic tools from classical algebraic topology that the geometric realization of the necklical set $\widehat{\mathbf{\Omega}}X$ is a topological space homotopy equivalent to the based loop space on $|X|$. The construction of the necklical set $\widehat{\mathbf{\Omega}}X$ involves a strict localization step described in section 3.4 as opposed to the Kan replacement step suggested by the constructions in \cite{Rivera- Zeinalian}. The result is a model for the based loop space of $|X|$ which is smaller than the one described in \cite{Rivera- Zeinalian} and therefore suitable for calculations.

 Moreover, we show that if we take the normalized chains associated to $\widehat{\mathbf{\Omega}}X$ we obtain a dg algebra generalizing Adams' cobar construction on the dg coalgebra of chains on $X$ as well as the extended cobar construction of K. Hess and A. Tonks as described in \cite{Hess- Tonks} when the simplicial set $X$ has a single vertex. The methods in the proof of the main result (Theorem 1) also go through to give a proof of the result in \cite{Rivera- Zeinalian} stated above in the second paragraph without relying on the theory of $\infty$-categories as explained in Remarks 2 and 3.

\section{Necklaces and necklical sets}

Denote by $Set_{\Delta}$ the category of simplicial sets and by $\Delta^{m}\in Set_{\Delta}$ the standard $m$-simplex. A \textit{necklace} is a wedge of standard simplices $T=\Delta^{n_1} \vee ... \vee \Delta^{n_k} \in Set_{\Delta}$ where the last vertex of $\Delta^{n_i}$ is identified with the first vertex of $\Delta^{n_{i+1}}$ and $n_i \geq 1$ for $i=1,...,k$. Each $\Delta^{n_i}$ is a subsimplicial set of $T$, which we call a \textit{bead} of $T$. The number of beads is denoted by $b(T).$ The set $T_0$, or the \textit{vertices} of $T$, inherits an ordering from the ordering of the beads in $T$ and the ordering of the vertices of each $\Delta^{n_i}$. A morphism of necklaces $f: T \to T'$  is a morphism of simplicial sets which preserves first and last vertices.
Denote by $Nec$ the category of necklaces.

We define a new category $Nec_0$ whose objects are necklaces with at least two beads and whose first bead is allowed to be of dimension $0$, these are called \textit{augmented necklaces}, and morphisms are maps of necklaces which preserve the first bead and last vertex. More precisely, objects in $Nec_0$ are simplicial sets of the form $\Delta^n \vee T$ where $n \geq 0$, $T$ is a necklace in $Nec$, and the last vertex of $\Delta^n$ is identified with the first vertex of $T$. A morphism between objects $S= \Delta^n \vee T$ and $S'=\Delta^{n'} \vee T'$  in $Nec_0$ is a map $g: S \to S'$ of simplicial sets sending the last vertex of $S$ to the last vertex of $S'$ and mapping the first bead $\Delta^n$ of $S$ into the first bead of $\Delta^{n'}$ of $S'$, in other words, satisfying $g(\Delta^n) \subseteq \Delta^{n'}$.

For any simplicial set  $X$ and vertices $x,y \in X_0$ define $(Nec \downarrow X)_{x,y}$ to be the category whose objects are maps of simplicial sets $(f: T \to X)$ where $T \in Nec$ and $f$ sends the first vertex of $T$ to $x$ and the last vertex of $T$ to $y$ and a morphism between objects $(f: T \to X)$ and $(f': T' \to X)$ in $(Nec \downarrow X)_{x,y}$ is given by a morphism $u: T \to T'$ in $Nec$ satisfying $f= f' \circ u$. Define $(Nec_0 \downarrow X)_{y}$ similarly but now objects are maps of simplicial sets $f: T \to X$ where $T \in Nec_0$ and $f$ sends the last vertex of $T$ to $y$.

A \textit{necklical set} is a functor $K: Nec^{op} \rightarrow Set$ and a morphism of necklical sets is given by a natural transformation of functors. Denote the category of necklical sets by $Set_{Nec}$.
A simplicial set $X$ with fixed vertices $x$ and $y,$  gives rise to an  example of a necklical set
$K_X  : Nec^{op} \rightarrow Set$
 via  the assignment
$K_X(T)= Hom(T,X)_{x,y},$ the set of simplicial maps that send the first vertex of $T$ to $x$ and the last vertex of $T$ to $y.$ Similarly, given a necklace $T \in Nec$ we will denote by $Y(T)$ the necklical set obtained through the Yoneda embedding, namely, $Y(T): Nec^{op} \to Set$ is defined by $Y(T):= \text{Hom}_{ Nec }(\_ \, , T)$.  For any two necklical sets $P$ and $Q$ define their product $P \times Q$ to be the necklical set

\begin{eqnarray*}
 \underset{\substack{ Y(T) \rightarrow P, Y(S) \rightarrow \,Q \\ T,S \in Nec}} {\text{colim}}\,\, Y(T \vee S).
 \end{eqnarray*}

In a similar way we define \textit{augmented necklical sets} as functors $L: Nec_0^{op} \to Set$ and denote the category of augmented necklical sets by $Set_{Nec_0}$.
 A simplicial set $X$ with a fixed vertex $y,$  gives rise an  example of an augmented  necklical set
 $L_X: Nec_0^{op} \to Set$
 via  the assignment
$L_X(T)=Hom(T,X)_{y},$ the set of simplicial maps that send  the last vertex of $T$ to $y.$

The \textit{dimension} of a necklace $T = \Delta^{n_1} \vee ... \vee \Delta^{n_k} \in Nec$ is defined by $\text{dim}(T)=n_1 + ... + n_k -k$, while the dimension of an augmented necklace $S=\Delta^{n_0} \vee \Delta^{n_1} \vee ... \vee \Delta^{n_k} \in Nec_0$ is defined by $\text{dim}(T)=n_0 + n_1 + ... + n_k -k$. Let
 \[Nec^n=\{T\in Nec \mid \dim T=n\}\]
 and
 \[Nec^{n,k}=\{T\in Nec^n \mid  b(T)=k\}.\]
 Given $K \in Set_{Nec}$ and an integer $n\geq 0$, define $K_n$ to be the set
\begin{eqnarray*}
K_n:= \bigsqcup_{ T \in Nec^n}  K(T)= \bigsqcup_{T \in Nec^n }  \text{Hom}_{Set_{Nec}}(Y(T), K),
\end{eqnarray*}
and
\begin{eqnarray*}
K_{n, k}:= \bigsqcup_{ T \in Nec^{n,k}}  K(T).
\end{eqnarray*}
For any $L \in Set_{Nec_0}$, the sets $L_n$  and $L_{n,k}$ are    defined similarly.

Morphisms of necklaces are generated by three types of morphisms described in the proposition below. For a proof see \cite{Rivera- Zeinalian} (Proposition 3.1).

\begin{proposition} Any non-identity morphism in $Nec$ is a composition of morphisms of the following type

\begin{itemize}
\item [(i)]  $f: T \to T'$ is an injective morphism of necklaces and $ \dim(T') - \dim(T) =1;$

\item [(ii)] $f: \Delta^{n_1} \vee ... \vee \Delta^{n_k} \to \Delta^{m_1} \vee ... \vee \Delta^{m_k}$ is a morphism of necklaces of the form $f=f_1 \vee ... \vee f_k$ such that for exactly one $p$
    with $n_p\geq 2,$
     $f_p=s^j: \Delta^{n_p} \to \Delta^{m_p}$ is a codegeneracy morphism $s^j$ for some $j$ (so $m_p=n_p-1$) and for all $i \neq p$, $f_i: \Delta^{n_i}  \to \Delta^{m_i}$ is the identity map of standard simplices (so $n_i=m_i$ for $i \neq p$);

\item [(iii)] $f: \Delta^{n_1} \vee ...\vee \Delta^{n_{p-1}} \vee \Delta^{n_p} \vee \Delta^{n_{p+1}} \vee... \vee  \Delta^{n_k} \to \Delta^{n_1} \vee ...\vee \Delta^{n_{p-1}} \vee \Delta^{n_{p+1}} \vee... \vee  \Delta^{n_k}$ is a morphism of necklaces such that,
    for some $1 \leq p \leq k$ with $k\geq 2,$  $f$ collapses the $p$-th bead $\Delta^{n_p}$ in the domain to the last vertex of  $\Delta^{n_{p-1}}$ (or to the first vertex of  $\Delta^{n_{p+1}}$)
    in the target
    and the restriction of $f$ to all the other beads is injective.
\end{itemize}
\end{proposition}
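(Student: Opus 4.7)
The plan is to prove the statement via the standard epi-mono factorization of simplicial maps, applied carefully inside the category $Nec$, and then to decompose each of the two factors into the stated elementary morphisms.

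First I would take an arbitrary non-identity morphism $f: T \to T'$ in $Nec$ and factor it in the category of simplicial sets as $T \xrightarrow{e} f(T) \xrightarrow{m} T'$, where $e$ is the surjection onto the image and $m$ is the inclusion. The first key claim I would need to establish is that the simplicial subset $f(T) \subseteq T'$ is itself a necklace, with the first and last vertices of $T$ (or rather their images) as its endpoints. To see this, note that each bead $\Delta^{n_i}$ of $T$ maps to a simplex of $T'$, which has a unique non-degenerate representative $\Delta^{m_i}$ sitting inside some bead of $T'$. Since $f$ is a simplicial map and the wedge vertex between $\Delta^{n_i}$ and $\Delta^{n_{i+1}}$ is mapped to a single vertex of $T'$, one checks that this vertex is the last vertex of the image simplex of $\Delta^{n_i}$ and the first vertex of the image simplex of $\Delta^{n_{i+1}}$, so that the chain of image simplices assembles into a well-defined sub-necklace of $T'$. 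With this established, both $e$ and $m$ live in $Nec$.

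Next I would decompose the injection $m: f(T) \hookrightarrow T'$ into type (i) morphisms by induction on the dimension gap $\dim(T') - \dim(f(T))$. An injective morphism of necklaces with dimension gap exceeding $1$ can always be factored as an elementary enlargement followed by a smaller injection: one either replaces a single bead $\Delta^{m_j}$ of $f(T)$ by a bead of one higher dimension sitting appropriately in $T'$, or one inserts a single new bead $\Delta^1$ at a wedge vertex. Each elementary enlargement is an injection raising the dimension by exactly $1$ and therefore is a morphism of type (i); iterating reduces the dimension gap to zero.

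For the surjection $e: T \to f(T)$, I would argue bead by bead. The restriction of $e$ to each bead $\Delta^{n_i}$ surjects onto a simplex, and standard simplicial combinatorics factors this restriction as a composition of codegeneracies $s^{j_1}\cdots s^{j_r}$ followed by an injection of $\Delta^{m_i}$ into $f(T)$; since $e$ is surjective, the images must cover $f(T)$, so for each $i$ the bead $\Delta^{n_i}$ either surjects onto a bead of $f(T)$ via codegeneracies or collapses to a single vertex. I would then decompose $e$ into elementary steps: each single codegeneracy applied on one bead (leaving the other beads untouched) yields a type (ii) morphism, and each collapse of a single bead $\Delta^{n_i}$ (with $n_i \geq 1$) to a wedge vertex, while leaving the other beads as identities, yields a type (iii) morphism. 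Composing these in a suitable order reconstructs $e$.

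The main obstacle, and the step that deserves the most care, is the verification that the image $f(T)$ is indeed a necklace and that the intermediate objects produced during the two inductive decompositions remain necklaces with the correct first and last vertices; this is what prevents the argument from being a mechanical transcription of the analogous fact for simplicial sets (epi-mono plus Eilenberg--Zilber). Once the necklace structure of $f(T)$ and of each intermediate factor is verified, everything else reduces to the standard decomposition of simplicial surjections and injections into elementary face and degeneracy operators, applied one bead at a time.
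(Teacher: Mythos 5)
The paper itself defers the proof to the cited reference (Rivera--Zeinalian, Proposition 3.1), so there is no in-text argument to compare against; on its merits, your strategy of factoring $f$ through its image as a surjection followed by an inclusion, verifying that the image is a sub-necklace, and then decomposing each factor into elementary maps is the right one and is the same general plan as the cited proof.

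Two points need tightening. First, your list of elementary enlargements used to decompose the inclusion $m\colon f(T)\hookrightarrow T'$ is not right. The second operation you name, ``inserts a single new bead $\Delta^1$ at a wedge vertex,'' is not a morphism in $Nec$, and more structurally, no injection in $Nec$ can increase the bead count in the target: since each bead of $f(T)$ lies inside a single bead of $T'$, every wedge vertex of $T'$ is already a wedge vertex of $f(T)$, so $b(f(T))\geq b(T')$. The correct second kind of elementary injection is the merge $W^j$ of two adjacent beads into one larger bead with the same vertex set (this is the type $(ib)$ move, which lowers the number of beads while raising the dimension by one). The decomposition of $m$ then proceeds by first filling in internal vertices bead by bead via cofaces (type $(ia)$) and then merging adjacent beads until the bead structure of $T'$ is reached (type $(ib)$). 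Second, for the surjection $e\colon T\to f(T)$, the order of elementary moves needs to be made explicit rather than deferred to ``a suitable order'': one applies all type $(ii)$ codegeneracies first, reducing each bead that is to be collapsed down to a $\Delta^1$, and only then collapses those $\Delta^1$'s one at a time. Each individual collapse is a legitimate type $(iii)$ morphism because that single step leaves all other beads of its domain untouched (hence injective), even if some of them are destined to collapse in a later step; without fixing this order, the requirement in $(iii)$ that the restriction to all other beads be injective would fail.
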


The morphisms of type $(i)$ and $(ii)$ can be furthered classified. For a morphism $f: T= \Delta^{n_1} \vee ... \vee \Delta^{n_k} \to T'=\Delta^{m_1} \vee... \vee \Delta^{m_l}$ of type $(i)$ we have two sub-types: $(ia)$ the number of vertices of $T$ is one less than the number of vertices of $T'$ (in particular, this implies $k=l$) and $(ib)$ the number of vertices of $T$ and $T'$ are equal (which, in particular, implies $l=k+1$). Morphisms of type $(ia)$ are of the form $f= id \vee d^{j} \vee id$ where $d^j: \Delta^{n_i} \to \Delta^{m_i}$, for some $i$ with $n_i+1=m_i$, is the simplicial $j$-th co-face morphism for some $j \in \{1,...,n_{i}-1\}$. Morphisms $f$ of type $(ib)$ are those for which there are $i \in \{1,...,k \}$ and $j \in \{1,...,m_i-1\}$ such that $f=id \vee W^j \vee id$, where $W^j: \Delta^{n_i} \vee \Delta^{n_{i+1}} \to  \Delta^{m_i}$ for $n_i + n_{i+1}=m_i$ and $W^j$ is the injective map whose image in $\Delta^{m_i}$ is the wedge of the two sub-simplicial sets corresponding to the $j$-th term in the Alexander-Whitney diagonal map applied to the unique non-degenerate top dimensional simplex in $\Delta^{m_i}$.  Given a necklace $T$ of dimension $n$ there are exactly $n$ morphisms $d^{i,T}_1: T^1_i \to T$ $(i=1,...,n)$ of type $(ia)$ and $n$ morphisms $d_0^{i,T}: T^0_i \to T$ $(i=1,...,n)$ of type $(ib)$. Morphisms of type $(ii)$ can be classified into two types as well: $(iia)$ those for which  $f_p=s^0$ or $f_p=s^{n_p}$ (i.e. $f_p$ is the first or last co-degeneracy morphism) and $(iib)$ otherwise. We will sometimes abuse notation and write $f=s^j$ for morphisms of type $(ii)$, omitting the index $p$ in the notation which indicates the bead to which the co-degeneracy $s^j$ is applied. A similar classification result holds for morphisms in $Nec_0$.

 \section{Necklical models for the based loop space and the based path space}

 For any simplicial set $X$ and $x,y \in X_0$ consider the graded set \[\bigsqcup_{T\in Nec} Hom(T,X)_{x,y}/\sim\]
 where $\sim$ is the equivalence relation generated by the following two rules:
\begin{eqnarray}\label{rule1}
f \circ s^{n_p+1} \sim f\circ  s^{0},  \, 1\leq p< k,
\end{eqnarray}
for any $f: \Delta^{n_1} \vee ... \vee \Delta^{n_k} \rightarrow  X  \in   Hom(T,X)_{x,y}$, where
\begin{multline*}
s^{n_p+1}:\Delta^{n_1} \vee ...\vee \Delta^{n_{p-1}} \vee \Delta^{n_p+1} \vee  \Delta^{n_{p+1}}\vee... \vee  \Delta^{n_k} \to \\
\Delta^{n_1} \vee ... \vee \Delta^{n_{p-1}} \vee \Delta^{n_p} \vee \Delta^{n_{p+1}} \vee ...\vee \Delta^{n_k}
\end{multline*} is given by applying the last co-degeneracy map to the $p$-th bead, and
\begin{multline*}
s_0: \Delta^{n_1} \vee ...\vee \Delta^{n_p} \vee \Delta^{n_{p+1}+1}\vee \Delta^{n_{p+2}} \vee... \vee  \Delta^{n_k} \to \\
 \Delta^{n_1} \vee ... \vee  \Delta^{n_p} \vee \Delta^{n_{p+1}} \vee
  \Delta^{n_{p+2}}\vee ...\vee \Delta^{n_k}
\end{multline*}
by applying the first co-degeneracy map to the $(p+1)$-th bead; and
\begin{equation}\label{rule2}
f \circ u \sim f,
\end{equation}
for any $f \in  Hom(T,X)_{x,y} $ and any morphism $u$ in $Nec$ of type $(iii)$. Denote  the $\sim$-equivalence class of $(f: T\to X)$ by
 $[f: T \to X]$.

\subsection{The necklical set $\mathbf{\Omega}(X;x,y)$ } Define a necklical set $\mathbf{\Omega} (X;x,y): Nec^{op} \to Set$ by declaring $\mathbf{\Omega} (X;x,y) (T)$ to be the subset of $\bigsqcup_{T'\in Nec} Hom(T',X)_{x,y}/\sim$ consisting of all $\sim$-equivalence classes represented by maps $T \to X \in (Nec \downarrow X)_{x,y}$.  This clearly defines a functor: given a map $u: T \to T'$ in $Nec$ and an element $[f: T' \to X] \in \mathbf{\Omega} (X;x,y) (T')$ we obtain a well defined element  $ [f \circ u: T \to T' \to X ] \in \mathbf{\Omega}(X;x,y)(T)$.
In particular, $\mathbf{\Omega} (X;x,y)=\{\mathbf{\Omega}_{n,k} (X;x,y)\}_{n\geq 0,k\geq 1}$ is bigraded with
$\mathbf{\Omega}_{n,k} (X;x,y):= \{T\to X \in (Nec^{n,k} \downarrow X)_{x,y}  \} / \sim $ (assuming that for a class $[f:T\rightarrow X]$ the representative map $f:T\rightarrow X$ is with minimal $k=b(T)$). Note that $\mathbf{\Omega} (X;x,y)$ is precisely the following colimit in the category of necklical sets
\begin{eqnarray*}
\mathbf{\Omega} (X;x,y)=  \underset{f: T \to X \in (Nec \downarrow X)_{x,y} } {\text{colim}} Y(T).
\end{eqnarray*}

We define the \textit{necklical face maps} of $\mathbf{\Omega}(X;x,y)$ to be the set maps
 \[
\begin{array}{ll}
d^1_i : \mathbf {\Omega} _{n,k}(X;x,y) \rightarrow \mathbf {\Omega}_{n-1,k}(X;x,y),\vspace{1mm}\\
 d^0_i : \mathbf {\Omega} _{n,k}(X;x,y) \rightarrow \mathbf {\Omega}_{n-1,k+1}(X;x,y) &    \text{for}\ \ i=1,...,n,
\end{array}
 \]
 given by $d^{\epsilon}_i[f: T \to X]= [f \circ d_{\epsilon}^{i,T}: T^{\epsilon}_i \to T \to X]$. It is straightforward to check that these maps are well defined, i.e. independent of representative, and satisfy the standard cubical relations:

\begin{equation}\label{c-relations} d_{j}^{\epsilon }d_{i}^{\epsilon' }=d_{i}^{\epsilon'}d_{j+1}^{\epsilon },\ \  i\leq j,\ \ \epsilon,\epsilon'\in\{0,1\}.
\end{equation}

Denoting by $n_j(T)$ the dimension of the $j$-th bead in any $T \in Nec^{n,k}$, let $n_{(r)}:=n_1(T)+\cdots +n_r(T)$ for
$1\leq r\leq k$ and $n_{(0)}$ :=0. Define the \textit{necklical degeneracy maps} of $\mathbf {\Omega} _{n,k}(X;x,y)$ to be the set maps
\[  \eta_j : \mathbf {\Omega} _{n,k}(X;x,y) \rightarrow \mathbf {\Omega}_{n+1,k}(X;x,y), \ \     j=1,...,n+k+1,
 \]
given by $\eta_j[f: T \to X]= [f \circ s^{j-1,T}: T_j \to T \to X]$, where
 \[T_j= \Delta^{n_1} \vee ... \vee \Delta^{n_{r-1} } \vee \Delta^{n_r +1 } \vee \Delta^{n_{r+1}}  \vee ... \vee \Delta^{n_l}\] when $n_{(r-1)} + 1 \leq j \leq n_{(r)}+1$.

The face and degeneracy maps satisfy the following identities:
\begin{equation}\label{c-relations2}
\begin{array}{rlll}
  d^{\epsilon}_{i}\eta_j & =
\left\{
  \begin{array}{llll}
   \eta_{j-1} d^{\epsilon}_{i}, &   i<j-r,   &n_{(r-1)}+1 <  j \leq n_{(r)}+1,           \vspace{1mm} \\
     \eta_{j} d^{\epsilon}_{i-1}, &  i>j-r+1, &n_{(r-1)}+1 \leq j < n_{(r)}+1, \\
  \end{array}
\right.
\vspace{1mm}

\\
  d^{\epsilon}_{i}\eta_j & =
\left\{
  \begin{array}{llll}
 Id, &  i=j-r                          \hspace{0.57in}   = n_{(r-1)}+1, &    \\
   \eta_{i}d^\epsilon_{i-1}, & i=j-r+1 \hspace{0.35in}= n_{(r-1)}+1, \vspace{1mm} \\
Id, & i=j-r,j-r+1                                      \neq  n_{(r-1)}+1, & \epsilon=1,     \\
d^{0}_{i+1}\eta_j, & i=j-r              \hspace{0.6in}               \neq  n_{(r-1)}+1 , & \epsilon=0, \vspace{1mm}\\
  \end{array}
\right.
\vspace{1mm}
\\
\\
\eta_i\eta_j  & =
     \eta_{j} \eta_{i-1},  \hspace{0.33in} i>j.
  \end{array}
   \end{equation}
In order to verify the above identities it is convenient to consider following combinatorial description of the necklical set $I^{n-1}:=Y(\Delta^n)$, which we call the \textit{standard $(n-1)$-cube}.
The faces of  the simplicial set $\Delta^n$ may be labeled by the subsets of the set $(0,1,...,n)$ as usual. The top face of the necklical set $I^{n-1}$ may be labeled by the expression
$[0,1,...,n]$ so that we may write
\[
\begin{array}{lllll}
  d^0_i ( [0,1,...,n] ) =[0,...,i][i,...,n],             \\
  d^1_i ( [0,1,...,n] )=  [0,1,...,\hat{i},...,n]  ,  & i=1,...,n-1,
\end{array}
\]
where $[0,...,i][i,...,n]$ denotes the face induced by $\Delta^i \vee \Delta^{n-i} \hookrightarrow \Delta^n$.
The description of  degeneracies $\eta_j:I^{n-1} \rightarrow I^{n}$ mimics the simplicial ones
\[\eta_j( [0,1,...,n] )= [0,1,...., j-2,j-1,j-1,j,...,n],\ \  j=1,...,n+1.\]

\subsection{The augmented necklical set $\mathbf{P}(X;x)$}
Define  an augmented necklical set $\mathbf{P} (X;x): Nec_0^{op} \to Set $ in a similar manner
by declaring $\mathbf{P} (X;x) (T)$ to be the subset of $\bigsqcup_{T'\in Nec_0} Hom(T',X)_{x}/\sim$ consisting of all $\sim$-equivalence classes represented by maps $T \to X \in (Nec_0 \downarrow X)_{x}$,  where $\sim$ is the equivalence relation given analogously to (\ref{rule1})--(\ref{rule2}). Similarly, define the \textit{augmented necklical face maps}
\[
\begin{array}{ll}
d^1_i : \mathbf {P} _{n,k}(X;x) \rightarrow \mathbf {P}_{n-1,k}(X;x), \vspace{1mm}\\
d^0_i : \mathbf {P} _{n,k}(X;x) \rightarrow \mathbf {P}_{n-1,k+1}(X;x)\ \ \text{for}\ \   i=1,...,n,
\end{array}
  \]
and \textit{augmented necklical degeneracy maps}
\[  \eta_j : \mathbf {P} _{n,k}(X;x) \rightarrow \mathbf {P}_{n+1,k}(X;x)\ \ \text{for}\ \  j=1,...,n+k+1  \]
satisfying
 the standard cubical relations given by
 (\ref{c-relations}) and
  \begin{equation}
\begin{array}{rlll}

 d^{\epsilon}_i\eta_j & =
\left\{
  \begin{array}{llll}
   \eta_{j-1} d^{\epsilon}_i,     &     \hspace{0.3in}  i< j,                \\
     \eta_{j} d^{\epsilon}_{i-1}, &      \hspace{0.3in}     i>j+1, \\
  \end{array}
\right.
\vspace{1mm}
\\
 d^1_{i}\eta_i   & =  d^1_{i+1}\eta_{i}  =Id,            \\
d^{0}_{i}\eta_i  & =  d^{0}_{i+1}\eta_i,       \hspace{1.5in}    1\leq i\leq n_0,  \\

\vspace{5mm}
\\
  d^{\epsilon}_{i}\eta_j & =
\left\{
  \begin{array}{llll}
   \eta_{j-1} d^{\epsilon}_{i}, &   i<j-r,       &n_{(r-1)}+1 <  j \leq n_{(r)}+1,           \vspace{1mm} \\
     \eta_{j} d^{\epsilon}_{i-1}, &  i >j-r+1,     &  n_{(r-1)}+1 \leq j < n_{(r)}+1, \\
  \end{array}
\right.
\vspace{1mm}

\\
  d^{\epsilon}_{i}\eta_j & =
\left\{
  \begin{array}{llll}
 Id, &  i=j-r                          \hspace{0.57in}   = n_{(r-1)}+1, &    \\
   \eta_{i}d^\epsilon_{i-1}, & i=j-r+1 \hspace{0.35in}= n_{(r-1)}+1, \vspace{1mm} \\
Id, & i=j-r,j-r+1                                      \neq  n_{(r-1)}+1, & \epsilon=1,     \\
d^{0}_{i+1}\eta_j, & i=j-r              \hspace{0.6in}               \neq  n_{(r-1)}+1 , & \epsilon=0, \vspace{1mm}\\
  \end{array}
\right.
\vspace{1mm}
\\
\\
\eta_i\eta_j  & =
     \eta_{j} \eta_{i-1},  \hspace{0.33in} i>j.
  \end{array}
   \end{equation}
 where $n_{(m)}=n_0+n_1+\cdots +n_m,\, 0\leq m\leq k.$
Note that these relations agree with (\ref{c-relations2}) for $n_0=0.$ The top cell of the augmented necklical set $I^n_{aug}:=Y_0(\Delta^n)$ (where $Y_0$ is the Yoneda embedding $Y_0: Nec_0 \to Set_{Nec_0}$) may be labeled by the symbol $0,1,..,n]$ so that we may write
\[
\begin{array}{lllll}
  d^0_i ( 0,1,..,n])=    0,...,i-1][i-1,...,n],            \vspace{1mm} \\

  d^1_i ( 0,1,..,n] ) =  0,1,...,\widehat{i-1},...,n]  ,  & i=1,...,n.
\end{array}
\]
The labeling of degeneracies remains the same.

Thus an $m$-dimensional cell $a$ of the augmented necklical set $I^n_{aug}$ is labelled by a sequence of blocks
\begin{multline}\label{faces}
a:=j_{1},...,j_{s_{1}}][j_{s_{1}},...,j_{s_{2}}]
[j_{s_{2}},...,j_{s_{3}}]...[j_{s_{q}},...,j_{s_{q+1}}]\ \ \text{with}\ \  j_{s_{q+1}}=n \ \ \text{and}\\
0\leq j_{1}<\ldots <j_{s_{q}}<n,
\end{multline}
where the dimension of the first block $j_{1},...,j_{s_{1}}]$ is $s_1-1,$ while the dimension of  each block $[j_{s_{k}},...,j_{s_{k+1}}]$  is $ s_{k+1}-s_k-1$, so $m=s_{q+1}-q-1.$
In particular, a vertex $v$ of $I^n_{aug}$ is labelled by
\[
v:=j_{1}][j_{{1}},j_{{2}}][j_{{2}},j_{{3}}]...[j_{q-1},j_q][j_{{q}},n]. \]
We have a map   $\psi: I^n_{aug} \to \Delta^n$ of graded sets which sends a face $a \in I^n_{aug} $ with labeling given as above to the $(s_1-1)$-simplex $(j_1,...,j_{s_1})\subset \Delta^n;$ in particular, the face
$a=d^0_1(0,1,...,n] )=0][0,1,...,n]$ is totally degenerate:
$\psi(a)=0\in \Delta^n.$ The two combinatorial descriptions of the faces of $I^n$ and $I^n_{aug}$, respectively, are compatible to one another
 via the combinatorial  analysis of the map $\psi.$

\subsection{The geometric realization of $\mathbf{\Omega} (X;x,y)$ and $\mathbf{P}(X;x)$}
The \textit{geometric realization} of the necklical set $\mathbf{\Omega} (X;x,y) $  is the topological space defined by
\begin{eqnarray*}
|\mathbf{\Omega} (X;x,y)|:= \bigsqcup_{n\geq 0}\mathbf{\Omega}_n (X;x,y)  \times | I^n |  / \sim
\end{eqnarray*}
where $| I^n |$ denotes the standard topological $n$-cube as a subspace of $\mathbb{R}^n$, $\mathbf{\Omega}_n (X;x,y) $ is considered as a topological space with the discrete topology, and $\sim$ is the equivalence relation defined as follows. The equivalence relation $\sim$ is generated by
$ (f, \partial^i_{\epsilon} (t))\sim (d^{\epsilon}_i(f), t) $  for $\partial^i_{\epsilon}: |I^{n-1}| \to |I^n|$ the usual cubical co-face maps, and $(f, \varsigma^j(t))\sim (\eta_j(f), t)$,
for any $f: \Delta^{n_1} \vee ... \vee \Delta^{n_k} \rightarrow X$, where  $\eta_j(f):= f \circ s^{j-1}$ and
\[s^j: \Delta^{n_1} \vee ... \vee \Delta^{n_p + 1} \vee ... \vee \Delta^{n_k} \to \Delta^{n_1} \vee ... \vee \Delta^{n_p} \vee ... \vee \Delta^{n_k}\]
is the morphism of necklaces obtained by applying the co-degeneracy morphism $s^j: \Delta^{n_p+1} \to \Delta^{n_p}$ to the $p$-th bead in the domain (for some $p \in \{1,...,k\}$), and $\varsigma^j: |I^{n+1}| \to |I^n|$ is the map induced by applying to the $p$-th factor in the decomposition $|I^{n+1}| =|I^{n_1-1}|\times ...\times | I^{n_p}| \times ... \times | I^{n_k-1}|$ the collapse map
$\varsigma^j: |I^{n_p}|\rightarrow |I^{n_p-1}|$ defined by
 \[ \varsigma^j (t_1,...,t_{n_p}) = (t_1,...,t_{j-1}, t_{j+1},...,t_{n_p} ) \] for $j=0,n_p+1$ (these are standard cubical degeneracies) and by
 \[ \varsigma^j (t_1,...,t_{n_p}) = (t_1,...,t_{j-1},\min(t_{j},t_{j+1}),t_{j+2},...,t_{n_p}) \] for $0<j<n_p+1$ (these are ``connections" in the sense of \cite{Brown- Higgins}).

We define the topological space $|\mathbf{P} (X;y)|$ in a completely analogous manner.

\begin{remark}
\normalfont The equality $f \circ s^{n_p}(T) = f\circ  s^{0}(S)$ in the definition of $\mathbf{\Omega} (X;x,y)$  is suggested by the fact  that  $|Y(T)|$ and $|Y(S)|$   define homeomorphic  cubes  and
this homeomorphism is compatible with the equality   $ |Y(s^{n_p}(T))|= |Y( s^{0}(S))| .$
\end{remark}

\subsection{Inverting $1$-simplices formally}

Given a simplicial set $X,$ form a set $X_1^{op}:=\{ x^{op} \mid x \in X_1\ \ \text{is non-degenerate} \}.$ Let $Z(X)$ be the minimal simplicial set containing the set
$X\cup X^{op}_1$ such that $\partial_0( x^{op} )= \partial_1( x)$ and  $\partial_1( x^{op} ) = \partial_0 (x).$
  Let $Set_{\Delta}^0$ be the category of pointed simplicial sets. Define
\begin{eqnarray*}
\widehat{\mathbf{\Omega}}: Set^0_{\Delta} \to Set_{Nec}
\end{eqnarray*}
as $\widehat{\mathbf{\Omega}}(X;x,y):= \mathbf{\Omega}(Z(X);x,y) / \sim$
where the equivalence relation $\sim$ is generated by
\[ f\sim  f'\circ g:T\rightarrow Z (X) \]
for all $T=\Delta^{n_{1}}\vee ...\vee\Delta^{n_p}\vee\Delta^{n_{p+1}}\vee ...\vee \Delta^{n_{k}} $ with $n_p=n_{p+1}=1$ and
  morphisms $f$ satisfying $f(\Delta^{n_p})=(f(\Delta^{n_{p+1}}))^{op},$ so $f$ induces a map
$f':  \Delta^{n_{1}}\vee ...\vee \Delta^{n_{p-1}}\vee\Delta^{n_{p+2}}\vee ...\vee \Delta^{n_{k}}\rightarrow X,$
and
\begin{multline*}
g: \Delta^{n_{1}}\vee ...\vee \Delta^{n_{p-1}}\vee\Delta^{n_p}\vee\Delta^{n_{p+1}}\vee\Delta^{n_{p+2}}\vee ...\vee \Delta^{n_{k}}
\rightarrow
\\
 \Delta^{n_{1}}\vee ...\vee \Delta^{n_{p-1}}\vee\Delta^{n_{p+2}}\vee ...\vee \Delta^{n_{k}}
\end{multline*}
is the collapse map.

The above equivalence relation also induces the functor
\begin{eqnarray*}
\widehat{\mathbf{P}}: Set^0_{\Delta} \to Set_{Nec_0}
\end{eqnarray*}
by replacing ${\mathbf{\Omega}}$  by $\widehat{\mathbf{\Omega}}$ in the definition of   ${\mathbf{P}}.$
 For simplicity we denote $\widehat{\mathbf{\Omega}}X:=\widehat{\mathbf{\Omega}}(X;x,x)$ and $\widehat{\mathbf{P}}X:=
\widehat{\mathbf{P}}(X;x)$ when the fixed point $x$ is understood.

\subsection{An explicit construction of $\widehat{\mathbf{\Omega}}X$  and $\widehat{\mathbf{P}}X$}

We now describe $\widehat{\mathbf{\Omega}}X$ and $\widehat{\mathbf{P}}X$ more concretely. Let $(X,x_0)$ be a pointed simplicial set. For a simplex $\sigma \in X$ denote by $\min \sigma$ and $\max \sigma$ the first and last vertices of $\sigma$, respectively. First define  $\widehat{\mathbf{\Omega}}'X$  to be the following graded set. For any $\sigma_i\in Z(X)_{>0}$, let $\dim (\bar \sigma)=\dim(\sigma)-1 $ and define
 \begin{multline}
\widehat{\mathbf{\Omega}}'_n X=\{ \bar \sigma_1\cdots \bar \sigma_k  \mid \max \sigma_i=\min \sigma_{i+1}\
\text{for all}\ i,\ \max\sigma_k= x_0 ,\\  \, | \bar \sigma_1|+\cdots +| \bar \sigma_k|=n,\, k\geq 1  \}
  \end{multline}
with relations
\[\bar \sigma_1\cdots  \bar \sigma_i\cdot \bar \sigma_{i+1}   \cdots \bar \sigma_k=
\bar \sigma_1\cdots \bar \sigma_{i-1}\cdot \bar \sigma_{i+2}\cdots    \bar \sigma_k
\ \ \text{and}\ \  \bar \sigma_i\cdot \bar \sigma_{i+1}= \overline{s_0( x)}
 \]
where $\sigma_{i},\sigma_{i+1}\in Z(X)_1$ such that $\sigma_{i+1}=\sigma^{op}_i$ and $x=\min \sigma_i$; and
\[\bar \sigma_1\cdots  \overline{s_{n_i}( \sigma_i)}\cdot \bar \sigma_{i+1}\cdots \bar \sigma_k=
\bar \sigma_1\cdots  \bar \sigma_{i}\cdot
\overline{s_0( \sigma_{i+1})} \cdots \bar \sigma_k\ \ \text{for}   \ \ n_i=\dim\sigma_i,\,i=1,...,k-1.\]
Then
\[\widehat{\mathbf{\Omega}}_n X \cong \{ \bar \sigma_1\cdots \bar \sigma_k \in
\widehat{\mathbf{\Omega}}'_n X \mid  \min \sigma_{1}=x_0\}.\]
and the monoidal structure $ \widehat{\mathbf{\Omega}}X \times  \widehat{\mathbf{\Omega}}X \to  \widehat{\mathbf{\Omega}}X$ is induced by concatenation of words with unit $e=s_0( x_0).$
In particular,     $ \widehat{\mathbf{\Omega}}_0 X$ is a group.

The set $ \widehat{\mathbf{\Omega}}_nX$  has the second grading
$ \widehat{\mathbf{\Omega}}_n X=
\{\widehat{\mathbf{\Omega}}_{n,k} X\}_{k\geq 1},$
where $\widehat{\mathbf{\Omega}}_{n,k}$
consists  of words of length $k$ having the same dimension $n.$
The face operators
 \[
\begin{array}{ll}
d^1_i: \widehat{\mathbf{\Omega}}_{n,k} X \rightarrow \widehat{\mathbf{\Omega}}_{n-1,k} X,\\
d^0_i: \widehat{\mathbf{\Omega}}_{n,k} X \rightarrow \widehat{\mathbf{\Omega}}_{n-1,k+1} X, \ \ \ 1\leq i\leq n,
\end{array}
 \]
act on a monomial $a\bar \sigma b$ as
 \begin{multline*}
d^\epsilon_{j}(a\bar \sigma b)=a d^\epsilon_i(\bar \sigma) b\ \
\text{for}\ \  i=j-\dim a,\, \dim a<j \leq \dim a +m,\,m+1=\dim \sigma\ \ \text{with} \\
                  \begin{array}{lllll} d^0_i(\bar \sigma) =\, \overline{\partial _{i+1}\cdots
                  \partial _{m+1}(\sigma)}\cdot \overline {\partial _0\cdots \partial _{i-1}(\sigma)}, \vspace{1mm}\\
                 d^1_i(\bar \sigma) = \,
                 \overline {\partial _i(\sigma)},& 1\leq i\leq m,
\end{array}
               \end{multline*}

while
\[\eta_j: \widehat{\mathbf{\Omega}}_{n,k} X \rightarrow \widehat{\mathbf{\Omega}}_{n+1,k} X\ \  \text{for} \ \
j=1,..,n+k+1
 \]
is given by
  \begin{multline*}
\eta_j(\bar \sigma_1 \cdots  \bar \sigma_k)=
 \bar \sigma_1\cdots \bar \sigma_{i-1}\cdot  \overline{s_{j-m_{i-1}}( \sigma_i)}\cdot \bar \sigma_{i+1}\cdots \bar \sigma_k \ \
\text{for}\ \
       m_{i-1}\leq j\leq m_{i},\\
       m_{i}=\dim\sigma_1+\cdots +\dim\sigma_ {i}\ \text{with}\ m_0=1,
 1\leq i\leq k.
\end{multline*}

Similarly, the graded set  $\widehat{\mathbf{P}} X=\{\widehat{\mathbf{P}}_n X\}_{n\geq 0}$ may be identified with $\mathbf{P}' X/\sim, $
where
${\mathbf{P}}' X$ is   a subset of the (set-theoretical) cartesian product
   $X\times \widehat{\mathbf{\Omega}}'X$  of two graded sets $X$ and $\widehat{\mathbf{\Omega}}'X:$
\begin{equation}
{\mathbf{P}}'_n X=
\{ \left(x\,,\, \bar \sigma_1\cdots \bar \sigma_k \right) \in\bigcup_{p+q=n} X_{p}\times \widehat{\mathbf{\Omega}}'_{q} X \mid \max x=\min \sigma_1\}
\end{equation}
and $\sim$ is defined by
setting the relation
\[ (s_{p}(x),y)\sim (x,\eta_1(y)). \]
The face operators
 \[d^0_i,d^1_i: \widehat{\mathbf{P}}_n X \rightarrow
 \widehat{\mathbf{P}}_{n-1} X, \ \ \ 1\leq i\leq n, \]
  are  then given
  for
 $ (x\,,  y)\in X_p\times \widehat{\mathbf{\Omega}}'_q X
\rightarrow   \widehat{\mathbf{P}}_n X    $ by
\[
 \begin{array}{llllll}
 d^0_i(x,y)= \left\{
   \begin{array}{llll}
  \left (\min x\, ,\,    \bar x  \cdot y \right), &  & i=1 ,
   \newline \vspace{1mm}\\
   \left( {\partial _{i}\cdots \partial _{p+1}(x)}\, ,\, \overline {\partial _0\cdots \partial _{i-2}(x)}    \cdot y \right), & &  2\leq i \leq p,           \newline \vspace{1mm}  \\
  (x\, , d^0_{i-p}(y))   , & &  p<i\leq n,
   \end{array}
 \right.
 \\
 \\
 d^1_i(x,y) = \left\{
   \begin{array}{llllllllll}
   ( \partial _{i-1}(x)\, ,  y), &  \hspace{1.50in}  1\leq i \leq p,\newline \vspace{1mm}\\
   (x ,d^1_{i-p}(y))   , &        \hspace{1.50in}   p<i\leq n,
   \end{array}
 \right.
 \end{array}
\]
and
the degeneracy maps
\[  \eta_j :  \widehat{\mathbf{P}} _{n,k}(X) \rightarrow
 \widehat{\mathbf{P}}_{n+1,k}(X)\ \ \text{for}\ \  j=1,...,n+k+1  \]
by
\begin{equation*}
\eta_j(x,y)=\left\{ \begin{array}{llll} (s_{j-1}(x), y),  &1\leq j\leq p+1, \\
                       (x, \eta_{j-p}(y)), & p+1\leq j\leq n+1.
\end{array}
\right.
\end{equation*}

Furthermore, concatenation of words gives a map of augmented necklical sets
 \begin{equation}\label{action}
 \widehat{\mathbf{P}}X\times  \widehat{\mathbf{\Omega}}X\rightarrow  \widehat{\mathbf{P}}X     .
 \end{equation}
We have  the  short sequence
\[
 \widehat{\mathbf{\Omega}} X   \overset{i }{\longrightarrow}
  \widehat{\mathbf{P}}X   \overset{pr}{\longrightarrow} X
\]
of maps of sets where $i$ is defined by  $i(y)=(x_0, y)$, for any $y \in \widehat{\mathbf{\Omega}} X $
while $pr(x,y)=x$ for
$(x,y)\in   \widehat{\mathbf{P}} X.$
The set map $i: \widehat{\mathbf{\Omega}} X \to \widehat{\mathbf{P}}X$ induces a continuous map $ \iota: |\widehat{\mathbf{\Omega}} X|  \to |\widehat{\mathbf{P}}X |$. The projection
$pr : \widehat{\mathbf{P}}X  {\longrightarrow} X$ together with the continuous map $|\psi|:|I^n|\rightarrow |\Delta^n|$ (induced by the maps of graded sets $\psi: I^n_{aug} \rightarrow \Delta^n$)
induce a continuous map $\xi: |\widehat{\mathbf{P}}X|  \to |X|$. More precisely, $\xi: |\widehat{\mathbf{P}}X | \to |X|$ is defined as the composition
\[ |\widehat{\mathbf{P}}X | \overset{Id\times pr_X} \longrightarrow |\widehat{\mathbf{P}}X |   \overset{pr \times |\psi| } \longrightarrow |X| \]
where $Id\times pr_X$ projects a cell $[(\sigma,y)\times |I^n|\times |I^q|]$ onto $[ (\sigma,y)\times |I^n|]$ and $pr\times |\psi| $ maps any cell $[(\sigma,y)\times |I^N|]$ onto $[\sigma \times |\Delta^{N}|]$. Thus $\xi$ is a continuous cellular map and
we have
\begin{proposition}\label{quasi} Let $(X,x_0)$ be a pointed connected simplicial set. Then

(i) the geometric realization $|\widehat{\mathbf{\Omega}} X|$ is a topological monoid,

(ii) the geometric realization $|\widehat{\mathbf{P}} X|$ is contractible, and

(iii) the geometric realization of $\xi$
\[
   |\widehat{\mathbf{\Omega}} X|   \overset{\iota}{\longrightarrow}
    |\widehat{\mathbf{P}}X|   \overset{\xi}{\longrightarrow} |X|
\]

is  a quasi-fibration.
\end{proposition}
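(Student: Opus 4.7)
Part (i) is essentially formal: the word concatenation of Section 3.5 yields a morphism of necklical sets $\widehat{\mathbf{\Omega}}X \times \widehat{\mathbf{\Omega}}X \to \widehat{\mathbf{\Omega}}X$, and geometric realization sends the necklical product $Y(T)\times Y(S)=Y(T\vee S)$ to $|I^{\dim T+\dim S}|=|Y(T)|\times|Y(S)|$, since $\dim(T\vee S)=\dim T+\dim S$. Hence concatenation descends to a continuous map on $|\widehat{\mathbf{\Omega}}X|$; associativity and unitality (with unit the class of $\overline{s_0(x_0)}$) are immediate from the combinatorics of wedging necklaces and the imposed degeneracy relations.

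For part (ii), my plan is to construct an explicit strong deformation retraction of $|\widehat{\mathbf{P}}X|$ onto the basepoint. Each cell $(x,\bar\sigma_1\cdots\bar\sigma_k)$ corresponds to a cubical cell $|I^p_{aug}|\times|I^{n_1-1}|\times\cdots\times|I^{n_k-1}|$ parametrizing a Moore-like path in $|X|$ from $\min x$ to $x_0$, and the all-ones corner $(1,\ldots,1)$ of this cube lies over $x_0$ because iterating the $d^1$-face maps lands at $\max\sigma_k=x_0$. I would define a contracting homotopy by sliding each point of the cube toward this corner and then check compatibility with the gluing maps. The key technical point, and what I expect to be the main obstacle of the proof, is compatibility with the $d^0$-face gluings (which split a bead and introduce a new cube factor) and with the edge-inversion relation from Section 3.4: a naive straight-line interpolation in the cubes will not descend to $|\widehat{\mathbf{P}}X|$, so one must reparametrize the contraction along lines modeled on the classical null-homotopy of the Moore path space $\{\gamma:[0,r]\to|X|\mid\gamma(r)=x_0\}$ given by continuously shrinking $r$ to $0$.

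For part (iii), assuming (i) and (ii), I would apply Dold's theorem on quasi-fibrations using the skeletal filtration of $|X|$. The base case, over $|X^{(0)}|$, reduces to the following identification: for each vertex $v$, the fiber $\xi^{-1}(v)$ is the realization of $\widehat{\mathbf{\Omega}}(X;v,x_0)$, which is homotopy equivalent to $|\widehat{\mathbf{\Omega}}X|$ because choosing a path in $X$ from $x_0$ to $v$ (possible by path-connectedness) and inverting it formally in $Z(X)$ provide mutually inverse translations via the monoid action from part (i). For the inductive step, I cover $|X^{(n)}|$ by (a) a regular open neighborhood of $|X^{(n-1)}|$ that deformation retracts onto it, where the inductive hypothesis applies, and (b) the open $n$-cells, over each of which $\xi^{-1}$ is trivial up to homotopy because the open cell is contractible and the deformation retraction of (ii) provides the required lift. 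Dold's gluing lemma combines these to show that $\xi_n:\xi^{-1}(|X^{(n)}|)\to|X^{(n)}|$ is a quasi-fibration, and passage to the colimit $|X|=\varinjlim|X^{(n)}|$ completes the proof. The exactness of the resulting homotopy sequence, together with contractibility of $|\widehat{\mathbf{P}}X|$, confirms that $\iota$ realizes $|\widehat{\mathbf{\Omega}}X|$ as the homotopy fiber.
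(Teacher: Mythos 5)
Your overall strategy is correct — monoid structure for (i), an explicit contraction for (ii), and Dold's quasi-fibration criterion for (iii) — but in parts (ii) and (iii) you take genuinely different routes from the paper, both of which should work but have different tradeoffs. For (ii), you propose contracting each cube directly toward the all-ones corner over $x_0$, observing (rightly) that a naive straight-line homotopy will not descend across the $d^0$ gluings and the edge-inversion relation, and suggesting a Moore-path-style reparametrization. The paper instead factors the contraction through the $1$-skeleton: it first asserts that $|\widehat{\mathbf{P}}X^1|$ is contractible (essentially the Cayley graph / universal cover of the $1$-skeleton of $Z(X)$, a tree) and then deforms each cube $|(x,\bar\sigma_1\cdots\bar\sigma_k)|$ onto the union of the edge-subcomplexes $|\hat x^{n,1}|\cup|\hat\sigma_1^{n_1,1}|\cup\cdots\cup|\hat\sigma_k^{n_k,1}|$, using that the edges not lying in the $d^0_1$-face can serve as a ``rim'' along which to retract. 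Going via the $1$-skeleton separates the cubical bookkeeping from the $\pi_1$ issue and makes the compatibility with $d^0$-faces easier to arrange, since each cube retracts onto a known $1$-dimensional subcomplex of its own boundary; your direct contraction has to handle both at once. For (iii), you use the skeletal filtration of $|X|$ and Dold's gluing lemma, whereas the paper passes to the barycentric subdivision and builds the covering $\mathcal U$ of open star-like neighborhoods of simplices closed under finite intersections, then applies the covering form of the Dold--Thom criterion (the condition that $\xi^{-1}(x)\hookrightarrow\xi^{-1}(U)$ is an equivalence for $x\in U\in\mathcal U$). These are two standard, interchangeable ways to invoke Dold--Thom; the covering version avoids the inductive bookkeeping of the skeletal argument, while your version makes the fiber identification over cells more visible. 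One small inaccuracy to flag: the fiber $\xi^{-1}(v)$ over a vertex is not literally $|\widehat{\mathbf{\Omega}}(X;v,x_0)|$ — because $\psi^{-1}(\text{vertex})$ includes whole $d^0_1$-faces of cubes — but it deformation retracts onto it, which is all you need; and your translation argument using $Z(X)$-inverses to compare fibers over different vertices is a correct point that the paper leaves implicit.
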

\begin{proof}
(i) Immediately follows from the definition of the geometrical realization of the monoid $\widehat{\mathbf{\Omega}} X.$

(ii) A contraction of $|\widehat{\mathbf{P}} X|$ into the vertex labeled by $(x_0,e)$ may be obtained as follows.
First note that the $1$-dimensional subcomplex
$|\widehat{\mathbf{P}} X^1|$ of $|\widehat{\mathbf{P}} X|$ is contractible, and below we deform
$|\widehat{\mathbf{P}} X|$ into $|\widehat{\mathbf{P}} X^1|.$

Given $(x,y)\in |\widehat{\mathbf{P}} X| $ with $x\in X_n,$ let $|\hat x^{n,1}|$ denote
  the  union of edges of
    $| (x, y)|\in  |\widehat{\mathbf{P}} X| $  not  lying in $|d^0_1(x,y)|=|(\min x, \bar x\cdot \bar y)|.$

Then a cube $|(x, \bar \sigma_1\cdots \bar \sigma_k )|$  in $|\widehat{\mathbf{P}}  X|$ can be deformed
into the subcomplex $|\hat x^{n,1}|\cup |\hat \sigma_1^{n_1,1}|\cup ...\cup |\hat \sigma_k^{n_k,1}|$  of   $|\widehat{\mathbf{P}} X^1|,$
where  $|\hat \sigma_i^{n_i,1}|$ is defined by the cube $|(\sigma_i, \bar \sigma_{i+1}\cdots \bar \sigma_k )|$ for all $i.$ These deformations may be defined so that they are compatible at the intersection of any two cubes in $|\widehat{\mathbf{P}}  X|$ and therefore induce a global contraction of  $|\widehat{\mathbf{P}}  X|$ into the vertex $|(x_0,e)|$.

(iii)
 Recall $|X|$ is a space defined as a colimit of standard topological simplices with identifications given by the face and dengeneracy maps of $X$. Take the barycentric subdivision of each standard simplex in the colimit to obtain a finer subdivision of $|X|$ into simplices. For each simplex $\sigma \subset |X|$ in this subdivision let $U_{\sigma}$ be an open neighborhood containing $\sigma$ as a deformation retract; in particular, each $U_{\sigma}$ is contractible.
  Let $\mathcal{U}$ be the smallest collection of open sets containing $\{U_{\sigma} \}$ which is closed under finite intersections. Then $\mathcal{U}$ is an open covering of $|X|$ with the property that for any $U \in \mathcal{U}$ and any $x \in U$, $\zeta^{-1}(x)\hookrightarrow \zeta^{-1}(U)$ is a homotopy equivalence.
  It follows that $\xi$ satisfies the criterion in \cite{Dold- Thom}  to be a quasi-fibration.
\end{proof}
From the long exact sequence of the homotopy groups of a quasi-fibration and the contractibility of $|\widehat{\mathbf{P}}  X|$ we have
\begin{corollary}\label{coro}
For a  simplicial set $X$ there is a natural isomorphism
\[  \pi_{\ast}(|X|)\approx  \pi_{\ast-1} (|\widehat{\mathbf{\Omega}} X|) . \]
\end{corollary}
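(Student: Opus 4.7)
The plan is to read the corollary directly off Proposition \ref{quasi} via the standard long exact sequence of a quasi-fibration. By Proposition \ref{quasi}(iii) we have the quasi-fibration
\[ |\widehat{\mathbf{\Omega}} X| \xrightarrow{\iota} |\widehat{\mathbf{P}}X| \xrightarrow{\xi} |X|, \]
and a quasi-fibration, by definition in \cite{Dold- Thom}, induces a long exact sequence of homotopy groups
\[ \cdots \to \pi_n(|\widehat{\mathbf{\Omega}}X|) \to \pi_n(|\widehat{\mathbf{P}}X|) \to \pi_n(|X|) \to \pi_{n-1}(|\widehat{\mathbf{\Omega}}X|) \to \cdots \]
based at $(x_0,e)$ (resp.\ $e$ and $x_0$).

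First I would point out that by Proposition \ref{quasi}(ii) the total space $|\widehat{\mathbf{P}}X|$ is contractible, so $\pi_n(|\widehat{\mathbf{P}}X|)=0$ for all $n\geq 0$. Substituting this into the long exact sequence immediately forces the connecting map $\partial: \pi_n(|X|) \to \pi_{n-1}(|\widehat{\mathbf{\Omega}}X|)$ to be an isomorphism for every $n\geq 1$, while for $n=0$ the connectedness of $|X|$ (which was part of the hypothesis of Proposition \ref{quasi}; the corollary should be understood in this setting, or proved for each connected component separately) and the surjection $\pi_0(|\widehat{\mathbf{P}}X|)\to \pi_0(|X|)$ being from a point imply that $\pi_0(|X|)$ is a point and hence $\pi_{-1}(|\widehat{\mathbf{\Omega}}X|)$ is trivially interpreted.

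Finally, the naturality claim (``natural isomorphism'') follows because the entire construction is functorial: a map $X\to X'$ of pointed connected simplicial sets induces a morphism of quasi-fibrations, whence a morphism of the associated long exact sequences, and the Five Lemma together with contractibility of the total spaces shows that the connecting isomorphisms commute with the induced maps. There is essentially no obstacle here beyond invoking Proposition \ref{quasi}; the only subtlety worth mentioning is making sure that the basepoints are consistently chosen (the vertex $|(x_0,e)|\in |\widehat{\mathbf{P}}X|$ maps to $|x_0|\in |X|$ and its fiber contains the monoid unit $e\in |\widehat{\mathbf{\Omega}}X|$), so all three long exact sequences fit together coherently.
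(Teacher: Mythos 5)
Your argument is exactly the paper's: invoke the quasi-fibration from Proposition \ref{quasi}(iii), take its long exact sequence of homotopy groups, and use contractibility of $|\widehat{\mathbf{P}}X|$ from Proposition \ref{quasi}(ii) to conclude the connecting maps are isomorphisms. The extra remarks on naturality and basepoint consistency are fine elaborations but do not change the approach.
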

\begin{example}
\normalfont Let $Y$ be the boundary of $\Delta^2.$ Then $\xi$ can be thought of as an extension of a simplicial approximation of the exponential map
$exp=e^{2 \pi i t}:{\mathbb R}\rightarrow S^1.$ See Figure 1 below where
$\alpha_{012}=[02][21][10]$ and $\alpha_{210}=[01][12][20]$ are identified with  representatives of the fundamental group of the circle.

\newpage

\unitlength 1mm 
\linethickness{0.4pt}
\ifx\plotpoint\undefined\newsavebox{\plotpoint}\fi 
\begin{picture}(114.27,47.267)(0,0)
\put(65.336,41.32){\makebox(0,0)[cc]{$_{_{02][21][10]}}$}}
\put(18.623,41.32){\makebox(0,0)[cc]{$_{_{12][20]}}$}}
\put(7.836,41.32){\makebox(0,0)[cc]{$_{_{01][12][20]}}$}}
\put(42.036,41.32){\makebox(0,0)[cc]{$_{_{01][10]}}$}}
\put(83.157,44.439){\line(1,0){25.456}}
\put(108.543,44.369){\line(1,0){.9428}}
\put(110.428,44.369){\line(1,0){.9428}}
\put(112.314,44.369){\line(1,0){.9428}}
\put(13.887,46.871){\makebox(0,0)[cc]{$1$}}
\put(25.312,46.871){\makebox(0,0)[cc]{$2$}}

\put(36.312,46.871){\makebox(0,0)[cc]{$0$}}

\put(48.787,46.871){\makebox(0,0)[cc]{$1$}}
\put(60.587,46.871){\makebox(0,0)[cc]{$2$}}
\put(83.187,46.871){\makebox(0,0)[cc]{$1$}}
\put(94.787,46.871){\makebox(0,0)[cc]{$2$}}
\put(36.135,37.898){\vector(0,-1){8.839}}
\put(30.301,24.993){\line(1,0){13.789}}
\multiput(44.09,24.993)(-.044195652,-.033625){184}{\line(-1,0){.044195652}}
\multiput(35.958,18.806)(-.033672619,.03577381){168}{\line(0,1){.03577381}}
\put(27.5,25.0){\makebox(0,0)[cc]{$0$}}
\put(46.9,25.0){\makebox(0,0)[cc]{$2$}}

\put(32.953,33.832){\makebox(0,0)[cc]{$\xi$}}
\put(41.67,33.371){\makebox(0,0)[cc]{$exp$}}
\put(71.137,47.267){\makebox(0,0)[cc]{$\alpha_{012}$}}

\put(105.078,48.0){\makebox(0,0)[cc]{$\alpha_{012}^2$}}
\put(3.077,47.267){\makebox(0,0)[cc]{$\alpha_{210}$}}
\put(93.712,44.496){\vector(1,0){.375}}
\put(96.212,44.496){\vector(-1,0){.5}}

\put(30.272,24.807){\circle*{1.282}}
\put(44.358,24.807){\circle*{1.282}}
\put(35.949,18.71){\circle*{1.282}}

\put(83.039,44.358){\line(-1,0){83.459}}
\put(-.281,44.287){\line(-1,0){.8409}}
\put(-1.962,44.287){\line(-1,0){.8409}}
\put(-3.644,44.287){\line(-1,0){.8409}}
\put(36.369,44.358){\circle*{1.682}}

\put(48.772,44.358){\circle*{1.1}}
\put(60.545,44.358){\circle*{1.1}}

\put(71.056,44.358){\circle*{1.682}}
\put(105.113,44.568){\circle*{1.682}}

\put(25.227,44.358){\circle*{1.1}}
\put(13.875,44.358){\circle*{1.1}}

\put(3.153,44.358){\circle*{1.682}}

\put(83.249,44.5){\circle*{1.1}}
\put(94.812,44.5){\circle*{1.1}}

\put(58.653,44.5){\vector(1,0){1.051}}
\put(46.67,44.5){\vector(1,0){1.682}}
\put(62.437,44.5){\vector(-1,0){1.051}}
\put(81.357,44.5){\vector(1,0){.841}}
\put(27.119,44.5){\vector(-1,0){1.051}}
\put(22.915,44.5){\vector(1,0){1.051}}
\put(11.562,44.5){\vector(1,0){1.261}}

\put(35.949,15.6){\makebox(0,0)[cc]{$1$}}
\end{picture}

\begin{center}
 Figure 1.
The map $\xi$  on $ \partial \Delta^2 \approx  S^1$  as a simplicial approximation of the exponential  map.
\end{center}

\end{example}

\vspace{0.2in}

More generally, let $Y$ be one dimensional, i.e.,
it is homotopically equivalent to a wedge of circles.
 See
Figure 2 below
where
$\alpha_{ijk}$ and $\alpha_{kji}$ denote opposite representatives of the fundamental group given by
the closed edge path  $((ij),(jk),(ki))$ in  $Y:=|\Delta^3|^1,$ the $1$-skeleton of $|\Delta^3|,$ homotopically equivalent to the wedge of three circles.


\newpage
\unitlength .9mm 
\linethickness{0.4pt}
\ifx\plotpoint\undefined\newsavebox{\plotpoint}\fi 
\begin{picture}(584.231,153.7)(0,0)
\put(40.712,112.18){\line(1,0){53.366}}
\multiput(94.078,112.18)(.0470268097,.037461126){373}{\line(1,0){.0470268097}}
\put(111.619,126.153){\line(0,1){22.149}}
\multiput(111.619,148.302)(-.0443422983,-.0374352078){409}{\line(-1,0){.0443422983}}
\multiput(93.483,132.991)(-6.24325,.03725){4}{\line(-1,0){6.24325}}
\put(68.51,133.14){\line(-1,0){.149}}
\put(111.619,148.302){\line(-1,0){28.541}}
\put(111.47,126.004){\line(-1,0){27.798}}
\multiput(40.712,112.031)(-.0439755501,-.0374352078){409}{\line(-1,0){.0439755501}}
\put(22.874,96.572){\line(1,0){27.5}}
\multiput(50.077,96.572)(.3345,.037){4}{\line(1,0){.3345}}
\put(51.415,96.72){\line(1,0){.743}}
\put(22.726,74.274){\line(1,0){28.987}}
\multiput(40.861,89.882)(6.7635,-.037){4}{\line(1,0){6.7635}}
\put(68.807,133.14){\circle*{1.516}}
\put(68.361,112.109){\circle*{1.601}}
\put(67.321,89.734){\circle*{1.516}}
\put(51.713,96.572){\circle*{1.601}}
\put(51.118,74.125){\circle*{1.516}}
\put(84.118,125.856){\circle*{1.516}}
\put(83.524,148.302){\circle*{1.601}}
\put(94.078,111.883){\circle*{.94}}
\put(111.322,125.856){\circle*{.94}}
\put(111.47,148.302){\circle*{1.189}}
\put(40.861,111.883){\circle*{.892}}
\put(40.712,111.883){\circle*{.892}}
\put(40.861,111.883){\circle*{1.189}}
\put(22.874,96.572){\circle*{.892}}
\put(22.726,74.125){\circle*{.892}}
\put(23.023,96.572){\circle*{1.226}}
\put(67.321,89.734){\circle*{1.784}}
\put(51.267,74.125){\circle*{1.601}}
\put(68.659,133.14){\circle*{1.487}}
\put(68.51,132.991){\circle*{1.784}}
\put(83.821,125.856){\circle*{1.487}}
\put(93.632,112.031){\vector(1,0){.078}}\multiput(91.7,111.883)(.483,.037){4}{\line(1,0){.483}}
\put(110.876,125.41){\vector(2,1){.078}}\multiput(109.835,124.815)(.0650625,.0371875){16}{\line(1,0){.0650625}}
\put(111.47,147.856){\vector(0,1){.078}}\multiput(111.619,146.518)(-.03725,.3345){4}{\line(0,1){.3345}}
\put(108.943,148.302){\vector(1,0){1.635}}
\put(92.889,133.14){\vector(1,0){.078}}\multiput(90.956,132.991)(.48325,.03725){4}{\line(1,0){.48325}}
\put(110.727,147.856){\vector(3,4){.078}}\multiput(109.984,146.816)(.03715,.052){20}{\line(0,1){.052}}
\put(39.969,111.734){\vector(3,4){.078}}\multiput(39.226,110.693)(.03715,.05205){20}{\line(0,1){.05205}}
\put(40.712,89.734){\circle*{.892}}
\put(93.483,132.842){\circle*{.892}}
\put(112.362,149.491){$3$}
\put(82.768,150.109){$0$}
\put(67.868,134.923){$0$}

\put(82.868,127.109){$0$}
\put(67.568,113.909){$0$}
\put(113.254,125.856){$2$}

\put(40.368,113.909){$2$}
\put(41.818,87.609){\makebox(0,0)[cc]{$3$}}
\put(50.868,98.109){$0$}
\put(66.668,91.609){$0$}
\put(50.368,76.058){$0$}

\put(92.0,134.626){$1$}
\put(93.0,113.909){$1$}

\put(20.199,73.0){$1$}
\put(20.199,95.5){$1$}

\put(110.43,126.153){\vector(4,1){.078}}\multiput(109.24,125.856)(.14875,.037125){8}{\line(1,0){.14875}}
\put(79.956,110.099){\makebox(0,0)[cc]{$_{_{01][10]}}$}}
\put(109.443,117.68){\makebox(0,0)[cc]{$_{_{12][21][10]}}$}}
\put(96.456,124.369){\makebox(0,0)[cc]{$_{_{02][21][10]}}$}}

\put(121.521,136.109){\makebox(0,0)[cc]{$_{_{23][32][21][10]}}$}}

\put(98.521,146.609){\makebox(0,0)[cc]{$_{_{03][32][21][10]}}$}}
\put(81.368,131.609){\makebox(0,0)[cc]{$_{_{01][13][32][21][10]}}$}}
\put(92.521,140.572){\makebox(0,0)[cc]{$_{_{13][32][21][10]}}$}}
\put(52.902,110.309){\makebox(0,0)[cc]{$_{_{02][20]}}$}}
\put(27.064,105.045){\makebox(0,0)[cc]{$_{_{12][20]}}$}}
\put(35.807,72.193){\makebox(0,0)[cc]{$_{_{01][13][32][20]}}$}}
\put(29.168,86.109){\makebox(0,0)[cc]{$_{_{13][32][20]}}$}}
\put(48.168,103.598){\makebox(0,0)[cc]{$_{_{23][32][20]}}$}}
\put(52.902,88.098){\makebox(0,0)[cc]{$_{_{03][32][20]}}$}}
\put(33.118,94.609){\makebox(0,0)[cc]{$_{_{01][12][20]}}$}}
\put(75.715,147.926){\line(0,1){.0745}}
\put(67.242,133.061){\line(-1,0){.892}}
\put(65.458,133.061){\line(-1,0){.892}}
\put(63.674,133.061){\line(-1,0){.892}}
\put(61.89,133.061){\line(-1,0){.892}}
\put(82.404,125.777){\line(-1,0){.9344}}
\put(80.536,125.777){\line(-1,0){.9344}}
\put(78.667,125.777){\line(-1,0){.9344}}
\put(76.798,125.777){\line(-1,0){.9344}}
\put(52.377,96.345){\line(1,0){.9556}}
\put(54.289,96.345){\line(1,0){.9556}}
\put(56.2,96.345){\line(1,0){.9556}}
\put(58.111,96.345){\line(1,0){.9556}}
\put(68.282,89.507){\line(1,0){.8707}}
\put(70.024,89.507){\line(1,0){.8707}}
\put(71.765,89.507){\line(1,0){.8707}}
\put(73.507,89.507){\line(1,0){.8707}}
\put(51.931,74.196){\line(1,0){.998}}
\put(53.927,74.153){\line(1,0){.998}}
\put(55.923,74.11){\line(1,0){.998}}
\put(57.919,74.068){\line(1,0){.998}}
\put(67.368,58.109){\makebox(0,0)[cc]{$\xi$}}
\put(70.957,64.487){\vector(0,-1){13.081}}
\multiput(71.311,42.92)(.0368125,-.72183333){48}{\line(0,-1){.72183333}}
\put(51.865,19.232){\line(1,0){19.445}}
\put(74.139,19.232){\line(1,0){17.324}}
\multiput(-112.944,-39.487)(-.0723993174,.0374061433){293}{\line(-1,0){.0723993174}}
\multiput(-284.772,-106.663)(-.0723993174,.0374061433){293}{\line(-1,0){.0723993174}}
\multiput(73.078,8.272)(-.0723993174,.0374061433){293}{\line(-1,0){.0723993174}}
\put(49.368,19.009){\makebox(0,0)[cc]{$0$}}
\put(93.352,19.009){\makebox(0,0)[cc]{$2$}}
\put(71.311,46.163){\makebox(0,0)[cc]{$3$}}

\multiput(-112.748,-39.487)(.062537931,.0374172414){290}{\line(1,0){.062537931}}
\multiput(-284.576,-106.663)(.062537931,.0374172414){290}{\line(1,0){.062537931}}
\multiput(73.274,8.272)(.062537931,.0374172414){290}{\line(1,0){.062537931}}
\multiput(91.113,19.569)(.037125,-.074375){8}{\line(0,-1){.074375}}
\multiput(71.342,42.907)(-.0374515504,-.0458042636){516}{\line(0,-1){.0458042636}}
\put(81.118,145.859){\makebox(0,0)[cc]{$\alpha_{_{023}}\alpha_{_{012}}$}}
\put(67.118,130.609){\makebox(0,0)[cc]{$\alpha_{_{123}}$}}
\put(82.118,123.359){\makebox(0,0)[cc]{$\alpha_{_{012}}$}}
\put(53.118,93.859){\makebox(0,0)[cc]{$\alpha_{_{210}}$}}
\put(69.868,86.609){\makebox(0,0)[cc]{$\alpha_{_{023}}$}}
\put(54.118,71.109){\makebox(0,0)[cc]{$\alpha_{_{310}}\alpha_{_{023}}$}}
\put(40.868,111.859){\line(0,-1){14.25}}
\put(40.868,95.109){\line(0,-1){5}}
\put(41.259,89.943){\vector(-4,1){.078}}\multiput(42.496,89.766)(-.2474,.0354){5}{\line(-1,0){.2474}}
\put(23.935,96.484){\vector(-4,1){.078}}\multiput(25.349,96.307)(-.2828,.0354){5}{\line(-1,0){.2828}}
\put(24.642,74.386){\vector(-1,0){1.237}}
\put(41.789,112.04){\vector(-1,0){.078}}\multiput(43.38,112.217)(-.3182,-.0354){5}{\line(-1,0){.3182}}
\put(40.917,91.185){\vector(0,-1){.53}}
\put(40.669,89.528){\vector(4,3){.078}}\multiput(22.379,74.392)(.0452722772,.0374653465){404}{\line(1,0){.0452722772}}
\put(82.635,148.313){\line(-1,0){.841}}
\put(80.953,148.313){\line(-1,0){.841}}
\put(79.271,148.313){\line(-1,0){.841}}
\multiput(67.867,112.429)(-.0374316547,.0393219424){556}{\line(0,1){.0393219424}}
\multiput(46.766,134.634)(-.03505,.03855){12}{\line(0,1){.03855}}
\multiput(45.925,135.559)(-.03505,.03855){12}{\line(0,1){.03855}}
\multiput(45.084,136.484)(-.03505,.03855){12}{\line(0,1){.03855}}
\put(51.0,123.934){\makebox(0,0)[cc]{$_{_{03][30]}}$}}
\multiput(71.382,42.843)(.037427426,-.0447792418){529}{\line(0,-1){.0447792418}}
\put(52.021,19.259){\circle*{1.5}}
\put(73.271,8.259){\circle*{1.5}}
\put(91.021,19.259){\circle*{1.5}}
\put(71.271,42.509){\circle*{1.5}}
\put(48.521,132.509){\circle*{1.5}}
\put(50.021,135.509){\makebox(0,0)[cc]{$3$}}
\put(73.447,4.772){\makebox(0,0)[cc]{$1$}}
\end{picture}

\vspace{0.2in}

\begin{center} Figure 2.
The  map $\xi$   for  $X=(\Delta^3)^1.$
\end{center}

\vspace{0.2in}

From now on we denote by $I^n$ the standard $n$-cube as a topological space since there will be no more risk of confusion with the standard $n$-cube as a necklical set. Similarly, we denote by $\Delta^n$ the standard $n$-simplex as a topological space, and simply denote by $\psi: I^n \to \Delta^n$ the continuous map denoted earlier by $|\psi|$.


\begin{definition}
Let $E$ be an equivariant space $E\times G\rightarrow E$ such that $G$ is a topological monoid.
A map    $g :(X,x_0)\times (Y,y_0)\rightarrow E$ is \textbf{equivariant} if there is a map
$\omega: (Y,y_0) \rightarrow (G,e)$ such that
\[g(x,y)=g(x,y_0)\cdot \omega (y)\ \ \text{for}  \ \  (x,y)\in X\times Y.\]
For a pair  $(X,Y)$ of subcomplexes of standard cubes $(I^n,I^q),$ the pair $(x_0,y_0):=(\mathbf{1},\mathbf{0})$ is assumed to consist
of
 the final and initial  vertices of $I^n $ and $I^q $ respectively.
A map $f:I^p\rightarrow E$ is
 \textbf{$d^0$-equivariant} if for each cell $I^k\subseteq I^p$
the restriction of $f$ to $d^0_i(I^k)=I^{i-1}\times I^{k-i}$
is equivariant
for  all   $i.$
\end{definition}

Let $ \pi: E \rightarrow Y$ be a $G$-fibration with $Y=|X|$ for a simplicial set $X.$
Let $\hat I^{n,1}$ be the  union of edges of $I^{n}$  not  lying in $d^0_1(I^n)$ and $\hat{i}:\hat I^{n,1}\hookrightarrow I^n$ the inclusion map.
For $q\geq 0,$ let $pr_n:I^n\times I^q\rightarrow I^n $ be the projection.
\begin{proposition}\label{lift}
Given $n\geq 2$ and a simplex $\sigma^n\in X_n,$ let  $\sigma^n: \Delta^n\rightarrow Y$ be  the corresponding map, and
let  $\rho_{n,1}: \hat I^{n,1}\times I^q\rightarrow E$  be an equvariant map
 such that
\[ \pi \circ \rho_{n,1} =  f_{\sigma^n}\circ  (\, \hat i\times 1)\ \  \text{for}\  \  f_{\sigma^n}:=\sigma^n\circ \psi \circ pr_n.    \]
 There is an equivariant and $d^0$-equivariant map $ g_{\sigma^n} :I^n\times I^q\rightarrow E$
such that
\[     \pi\circ g_{\sigma^n} = f_{\sigma^n}     \ \ \text{and}   \ \    \rho_{n,1}=g_{\sigma^n}\circ (\,\hat i\times 1),     \]
i.e., $g_{\sigma^n}$ makes the following diagram commutative
\[
\begin{array}{cccccc}
  \hat I^{n,1} \times I^q&     \overset{ {\rho}_{n,1}}{\longrightarrow}      &  E\\
        \hspace{-0.3in} \hat i\times 1 \downarrow     &               {\nearrow}    &  \hspace{0.1in}    \downarrow \pi \\
    I^n\times I^q &  \overset{f_{\sigma^n}}{\longrightarrow}  &    Y.
\end{array}
\]

\end{proposition}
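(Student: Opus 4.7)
The plan is to reduce the problem to a dimension-by-dimension extension on the cube $I^n$ and solve it with the homotopy lifting property of the $G$-fibration $\pi$, choosing lifts that are compatible with the $d^0$-equivariance constraints.

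\textbf{Reduction to $q=0$.} The equivariance of $\rho_{n,1}$ gives a map $\omega\colon I^q\to G$ with $\omega(\mathbf{0})=e$ such that $\rho_{n,1}(x,y)=\rho_{n,1}(x,\mathbf{0})\cdot\omega(y)$. Any equivariant extension $g_{\sigma^n}$ of $\rho_{n,1}$ must reuse this $\omega$, so it suffices to construct a map $\tilde g\colon I^n\to E$ extending $\tilde\rho:=\rho_{n,1}(-,\mathbf{0})$ with $\pi\circ\tilde g=\sigma^n\circ\psi$, and then set $g_{\sigma^n}(x,y):=\tilde g(x)\cdot\omega(y)$. This formula is manifestly equivariant and $d^0$-equivariant in the $I^q$ factor, and $\pi\circ g_{\sigma^n}=f_{\sigma^n}$ because the $G$-action preserves fibers of $\pi$.

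\textbf{Inductive construction of $\tilde g$.} Since $\hat I^{n,1}$ contains every vertex of $I^n$, the $0$-skeleton is already covered. We extend dimension by dimension. On the $1$-skeleton the only missing edges are those of the face $d^0_1(I^n)\cong I^{n-1}$. To respect the eventual $d^0$-equivariance on this face, I introduce an ``orbit path'' $\omega_1\colon d^0_1(I^n)\to G$ with $\omega_1(\mathbf{0})=e$: along each edge $\gamma$ of $d^0_1(I^n)$, lift $f_{\sigma^n}\circ\gamma$ to a path in $E$ starting at $\tilde\rho(0,\mathbf{0})$ via path-lifting in $\pi$, and define $\omega_1$ on $\gamma$ so that the lifted path equals $\tilde\rho(0,\mathbf{0})\cdot\omega_1(\gamma(t))$; by uniqueness of lifts over a point, its endpoint matches the value of $\tilde\rho$ at the other vertex. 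At higher dimensions $k\geq 2$, for each $k$-cube $I^k\subseteq I^n$ with $\tilde g$ already defined on $\partial I^k$, extend to the interior by a relative lifting against $\pi$, which is possible since $(I^k,\partial I^k)$ is a CW-pair and $\pi$ is a Serre fibration. For cubes lying entirely in $d^0_1(I^n)$, choose the lift so as to extend $\omega_1$ on that cube, thereby enforcing $\tilde g|_{I^k}=\tilde\rho(0,\mathbf{0})\cdot\omega_1|_{I^k}$.

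\textbf{Main obstacle.} The central subtlety is maintaining $d^0$-equivariance on every sub-cube $I^k\subseteq I^n$ throughout the extension. Each $d^0_i$-face of each sub-cube imposes a product condition on $\tilde g$ of the form $\tilde g(x,y)=\tilde g(x,\mathbf{0})\cdot \omega_i(y)$. The $1$-dimensional case is vacuous, so the conditions only start at dimension $2$. The $d^0$-faces of sub-cubes form a family closed under further $d^0$-facing, so proceeding by dimension and, whenever we extend to a cell contained in a previously treated $d^0$-face, re-using the orbit maps $\omega_i$ already constructed on that face, the product conditions can be met consistently. Because each step reduces to a relative lifting problem in the $G$-fibration $\pi$ against a CW-pair, the inductive construction goes through and terminates at $k=n$, producing the required $\tilde g$, and hence $g_{\sigma^n}$, with all the stated properties.
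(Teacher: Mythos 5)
Your overall strategy — reduce via equivariance, then build the lift inductively using the lifting property of the $G$-fibration and the cube/pair homeomorphism — is the right one and matches the spirit of the paper's proof. But there is a genuine gap at exactly the point you flag as the ``main obstacle,'' and you do not close it. You assert that the $d^0$-equivariance constraints ``can be met consistently'' by ``re-using the orbit maps $\omega_i$ already constructed,'' without exhibiting any formula that achieves this or checking that the proposed assignments agree on overlaps. This is where the proof actually lives: the $d^0_i$-face of $I^n$ for $1<i\leq n$ decomposes as $I^{i-1}\times I^{n-i}$, and the $d^0$-equivariance condition forces the lift on that face to be a product of the form $g_{\sigma^{i-1}}\cdot\omega_{\sigma^{n-i+1}}$, where $\sigma^{i-1}\otimes\sigma^{n-i+1}$ is the corresponding term of the Alexander--Whitney coproduct of $\sigma^n$. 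The paper's argument proceeds by inducting over the sub-faces $\sigma^k$ of $\sigma^n$ ($2\leq k<n$), constructing $g_{\sigma^k}$ and $\omega_{\sigma^k}$ for each, and then defining $\rho_n$ on all of $\hat I^n$ by using the already-built $g_{\sigma^{n-1}}$ on the $d^1_i$-faces and the explicit product formula on the $d^0_i$-faces; the compatibility of these prescriptions is then a consequence of the inductive equivariance and $d^0$-equivariance hypotheses, after which the pair homeomorphism $(I\times I^{n-1},\,1\times I^{n-1})\approx(I^n,\hat I^n)$ produces $g_{\sigma^n}$ and $\omega_{\sigma^n}$.

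A second, smaller issue: in your reduction to $q=0$ you claim the formula $g_{\sigma^n}(x,y)=\tilde g(x)\cdot\omega(y)$ is ``manifestly $d^0$-equivariant in the $I^q$ factor.'' But $d^0$-equivariance is not a statement about the $I^q$ factor; by the definition preceding the proposition it requires that the restriction of the map to $d^0_i(I^k)=I^{i-1}\times I^{k-i}$ be equivariant for every sub-cube $I^k\subseteq I^n$ and every $i$. This is a constraint on the $I^n$ factor, and it is precisely what forces the product structure above. So the reduction to $q=0$ is fine as a simplification but does not buy you $d^0$-equivariance; that property still has to be built into $\tilde g$, and your dimension-by-dimension extension does not explain how to do so in a way that is consistent as the codimension-one $d^0$-faces overlap along lower-dimensional cubes.
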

\begin{proof}
The proof is by induction on the dimension $n.$  Given $2\leq k\leq n,$
let $\hat I^k\subset I^k$ the union of the $(k-1)$-faces $d^{\epsilon}_i(I^k)$ of $I^k$ except the face $d^0_1(I^k),$
and let  $\hat i: \hat I^k\hookrightarrow I^k.$ For $k=2$ use the homeomorphism
 of pairs $(I\times I, 1\times I)\approx (I^2, \hat I^2)$
 to lift $f_{\sigma^2}$ to a map $g_{\sigma^2}: I^2\times I^q\rightarrow E$ that extends $\rho_{2,1}.$
In particular, $g_{\sigma^2}|_{d^0_1(I^2)\times I^q}\subset \pi^{-1}(\min \sigma^2),$ and, consequently, the map
 \[\omega_{\sigma^2}:d^0_1(I^{2})\times I^q\rightarrow  G\]
is also defined. Suppose $ g_{\sigma^k} :I^k\times I^q\rightarrow E$
and  $ \omega_{\sigma^k} :d^0_1(I^k)\times I^q\rightarrow G$
for  $I^k\subset I^n$ and $2\leq k<n$     has been constructed satisfying the hypotheses of the proposition.
For $k=n$
define the map $\rho_{n}: \hat I^n\times I^q\rightarrow E  $    by
\[ \rho_{n} |_{_{I^{n-1}\times I^q}}=\!
\left\{\!\!
  \begin{array}{llll}
    g_{\sigma^{n-1}}, &        I^{n-1}=d^1_i(I^n),  & 1\leq i\leq n      ,\vspace{5mm}\\
   g_{\sigma^{i-1}} \! \cdot \!\omega_{\sigma^{n-i+1}},                 &
                      I^{n-1}\!= d^0_i(I^n)= \! I^{i-1}\!\!\times\! I^{n-i},          &  1< i\leq n  ,
  \end{array}
\right.
 \]
where $\sigma^{i-1}\times \sigma^{n-i+1} $ is  a component of the AW decomposition of $\sigma^n$ and $g_{\sigma^{i-1}}  \cdot \omega_{\sigma^{n-i+1}}$ is the map $I^{i-1} \times (I^{n-i} \times I^q) \to E$ defined by $g_{\sigma^{i-1}} \cdot \omega_{\sigma^{n-i+1}}(t_1,...,t_{n+q-1})= g_{\sigma^{i-1}}(t_1,...,t_{i-1}) \cdot \omega_{\sigma^{n-i+1}}(t_i,...,t_{n+q-1})$.
  Then  the equivariance and $d^0$-equivariance of the maps $g_{\sigma^k}$ guarantee that  $\rho_n $
is well defined and the diagram
\[
\begin{array}{cccccc}
  \hat I^n\times I^q &  &     \overset{ {\rho}_n}{\longrightarrow}   &  & &  E\\
      \hspace{-0.3in}   \hat i\times 1 \downarrow        &    &          &     &                    &\!\!\!\!\! \pi \downarrow \\
    I^n\times I^q &  \overset{pr_n}{\longrightarrow} I^n &  \overset{\psi}{\longrightarrow}  &    \Delta^n &  \overset{\sigma^n}{\longrightarrow} &    Y.
\end{array}
\]
commutes.
Use the homeomorphism  of pairs $( I \times I^{n-1}, 1\times I^{n-1})\approx (I^n, \hat I^n)$
 to lift $f_n$ to a map $g_{\sigma^n}: I^n\times I^q\rightarrow E$ which extends $\rho_n.$
In particular, $g_{\sigma^n}|_{d^0_1(I^n)\times I^q}\subset \pi^{-1}(\min \sigma^n),$ and, consequently, the map
 \[\omega_{\sigma^n}:d^0_1(I^{n})\times I^q\rightarrow  G\]
is also constructed.
\end{proof}

Our main statement is the following

\begin{theorem}\label{loopmodel}
Let $Y=|X|$ be the geometric realization of a path connected simplicial set $X.$
Let $ \Omega Y \overset{i}{\rightarrow} PY\overset{\pi} \rightarrow Y$ be  the (Moore) path fibration on $Y.$ Then  there is a  commutative diagram
\begin{equation}\label{diagram}
\begin{array}{cccccc}
   |\widehat{\mathbf{\Omega}} X| & \overset{\omega}{\longrightarrow}                  &                 \Omega Y \\
        \iota  \downarrow            &                  &      \hspace{-0.1in} \iota   \downarrow \\
      |\widehat{\mathbf{P}}   X|  &  \overset{p}{\longrightarrow} &                     PY                    \\
     \xi \downarrow    &                                    &       \hspace{-0.1in}\pi     \downarrow           \\
 \hspace{0.1in}|X| & \overset{Id}{\longrightarrow}                    &                     Y
\end{array}
\end{equation}
in which
  $\omega$ is a monoidal map and homotopy equivalence.
\end{theorem}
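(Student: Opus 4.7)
The plan is to construct $p:|\widehat{\mathbf{P}}X|\to PY$ cell by cell by iterated application of Proposition \ref{lift} (with $E=PY$, $G=\Omega Y$, and $\pi$ the Moore path fibration), and to obtain $\omega$ as the restriction $\omega=p\circ\iota$ to the fiber $|\widehat{\mathbf{\Omega}}X|\subseteq|\widehat{\mathbf{P}}X|$. The equivariance and $d^0$-equivariance conditions of Proposition \ref{lift} mirror exactly the Alexander--Whitney decomposition encoded in the $d^0_i$ face operators of the augmented necklical set $\widehat{\mathbf{P}}X$, and the monoidal action (\ref{action}) is transported by $p$ to the action of $\Omega Y$ on $PY$, which will make $\omega$ monoidal automatically.

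The construction proceeds by induction on the dimension $n$ of the simplex $\sigma^n\in X_n$ indexing a cell $|(\sigma^n,y)|\subset|\widehat{\mathbf{P}}X|$. For $n=0$, send the basepoint vertex $(x_0,e)$ to the constant Moore loop at $x_0$ and, by induction on the word length $k$ and cube dimension $q$ of $y\in\widehat{\mathbf{\Omega}}'_qX$, extend over the cubes $|(x_0,y)|$ so that concatenation of words is mapped to concatenation of Moore loops. For each non-degenerate $1$-simplex $x$ choose a specific Moore path lift of $x\circ\psi$; formal inverses introduced by $Z(X)$ are realized by traversing these Moore paths in reverse. For $n\geq 2$, assume $p$ has been consistently defined on all cells indexed by simplices of dimension $<n$. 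Then for a cell $(\sigma^n,y)$, the partial data furnished by the $d^1$-faces (lower-dimensional simplex boundaries), the $d^0_i$-faces with $i>1$ (AW products already built out of lower-dimensional data combined through the action), and the $d^0_1$-face (the cell $(\min\sigma^n,\bar\sigma^n\cdot y)$ in the fiber) assemble into an equivariant map $\rho_{n,1}:\hat I^{n,1}\times I^q\to PY$ covering $f_{\sigma^n}=\sigma^n\circ\psi\circ pr_n$. Proposition \ref{lift} then yields the required extension $g_{\sigma^n}:I^n\times I^q\to PY$, defining $p$ on this cell while preserving both $d^0$-equivariance for the next induction step and the identity $\pi\circ g_{\sigma^n}=f_{\sigma^n}$.

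Commutativity of both squares in (\ref{diagram}) is then built into the construction: $\pi\circ p=\xi$ from the lifting, and $\omega=p\circ\iota$ by definition. Similarly, $\omega$ is monoidal because the inductive construction respects (\ref{action}), sending concatenation of necklical loops to concatenation of Moore loops, and because $\omega$ sends the unit cell $(x_0,e)$ to the constant Moore loop.

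The main obstacle is showing that $\omega$ is a homotopy equivalence. Proposition \ref{quasi} provides that the left column of (\ref{diagram}) is a quasi-fibration with contractible total space, while the right column is a classical Serre fibration with contractible total space $PY$. The induced map of long exact sequences of homotopy groups, together with the fact that the map of base spaces is the identity of $|X|$ and that both $|\widehat{\mathbf{P}}X|$ and $PY$ are contractible, forces $\omega_*:\pi_n(|\widehat{\mathbf{\Omega}}X|)\to\pi_n(\Omega Y)$ to be an isomorphism for every $n\geq 0$ by the five lemma. Since $|\widehat{\mathbf{\Omega}}X|$ is a CW complex and $\Omega Y$ has the homotopy type of a CW complex (by Milnor's theorem), Whitehead's theorem promotes this weak equivalence to a genuine homotopy equivalence, completing the proof.
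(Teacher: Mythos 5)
Your proposal follows essentially the same strategy as the paper: a simultaneous induction on simplex dimension constructing $p$ (and thereby $\omega=p\circ\iota$) via Proposition~\ref{lift}, with commutativity and monoidality built into the construction, and then deducing the homotopy equivalence from the comparison of the quasi-fibration of Proposition~\ref{quasi} with the Moore path fibration. If anything, you have usefully expanded the paper's terse final step (``apply Corollary~\ref{coro}'') by spelling out the five-lemma argument on the long exact sequences and the appeal to Whitehead's theorem and Milnor's CW result, which is precisely what that step requires.
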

\begin{proof}
The constructions of the maps  $p$ and $\omega$ are in fact  simultaneous by induction on the dimension of  simplices in $X$.
Given
 $|(x,y)|\in  | X_0\times \widehat{\mathbf{\Omega}}'_0 X | \to |\widehat{\mathbf{P}}X |$
with $y=\bar \sigma_1\cdots\bar \sigma_k ,$
  define $p(x, y)\in PY$ as the edge-path $\sigma_1\cup \cdots\cup \sigma_k$ with the initial vertex $x$ and the final vertex $x_0.$
Let  $ (v,y)\in |(x, y)|\subseteq   |X_1\times \widehat{\mathbf{\Omega}}'_0 X| \to |\widehat{\mathbf{P}}X|$ for $(x, y)\in X_1
\times \widehat{\mathbf{\Omega}}'_0 X$ with $x\in X_1$ non-degenerate.
Define $p(v,y)=  [v,\max x]\cdot \omega(y)\in PY,$ where $[v,\max x]\subset |x|$ is the interval.
Assume $\omega$ is defined on $\widehat{\mathbf{\Omega}}_r X$ for $0\leq r\leq q.$
Let  $ |(\sigma^n, y)|\subseteq  |X_n\times \widehat{\mathbf{\Omega}}'_q X|\to |\widehat{\mathbf{P}}X|$ for
$(\sigma^n, y)\in  X_n\times \widehat{\mathbf{\Omega}}'_q X \to \widehat{\mathbf{P}}X$ with $\sigma^n$ non-degenerate.
Then the hypotheses of Proposition  \ref{lift} are satisfied   for $\sigma^n,$ and let $ g_{\sigma^n} :I^n\times I^q\rightarrow E $
be the resulting map.
Define $p|_{|(\sigma^n, y)|}$ to be induced by  $g_{\sigma^n},$ and then $\omega$ is extended on
$\widehat{\mathbf{\Omega}}_r X$ for $0\leq r\leq q+n-1$
by  means of the maps $\omega_{\sigma^n}$ for all non-degenerate $\sigma^n\in X_n.$
Thus the maps $p$ and $\omega$ are constructed such that $ \iota \circ \omega=p\circ \iota.$ Finally apply Corollary \ref{coro}
 to finish the proof.
\end{proof}

\begin{remark} \normalfont If $X$ is a path connected Kan complex the restriction of $\omega$ to   $|\mathbf{\Omega}X| \to \Omega Y$ is also
 a homotopy equivalence. The same proof as the one above works for this case as well since if $X$ is a Kan complex then $\pi_0(|\mathbf{\Omega}X|)$ is also a group because each $1$-simplex in $X_1$ has an inverse up to homotopy. In particular, let $Y$ be a connected topological space with a base point $y \in Y$ and let $\operatorname{Sing}(Y,y)$ be the Kan complex consisting of singular simplices whose vertices are mapped to $y$. Then $|\mathbf{\Omega} \operatorname{Sing}(Y,y)|$ is homotopy equivalent to $\Omega Y$.
\end{remark}

\section{Algebraic models for the based loop space and the hat-cobar construction.}
\subsection{The hat-cobar construction} We fix a ground commutative ring $\Bbbk$  with unit $1_{\Bbbk}$. All modules are assumed to be  over  $\Bbbk.$
Since the face maps of
$\widehat{\mathbf{\Omega}} X$ satisfy the standard cubical set relations we can  form
the  chain complex
$(C'_\ast(\widehat{\mathbf{\Omega}} X),d )$
 with the differential
$d=\sum_{i=1}^n(-1)^i (d^1_i - d^0_i): C'_n(\widehat{\mathbf{\Omega}} X)\rightarrow C'_{n-1}(\widehat{\mathbf{\Omega}} X),$ but we refer to the chain complex
\[C_\ast(\widehat{\mathbf{\Omega}} X)=C'_\ast(\widehat{\mathbf{\Omega}} X)/C'_\ast(D(e)),\]
where $D(e)\subset \widehat{\mathbf{\Omega}} X$ denotes the set  of degeneracies arising from the unit $e,$
  as the chain complex of the necklical set $\widehat{\mathbf{\Omega}} X.$
 Moreover, $C_\ast(\widehat{\mathbf{\Omega}} X)$ is a dg algebra (since $\widehat{\mathbf{\Omega}} X$ is a monoidal necklical set) which calculates the singular homology of $|\widehat{\mathbf{\Omega}} X|$.
We also can form a quasi-isomorphic  dg algebra  $C^{N}_\ast(\widehat{\mathbf{\Omega}} X)=
C'_\ast(\widehat{\mathbf{\Omega}} X)/C'_\ast(D(\widehat{\mathbf{\Omega}} X)),$
where this time  $D(\widehat{\mathbf{\Omega}} X)\subset \widehat{\mathbf{\Omega}} X$ denotes the set of all degenerate elements.

For a  module $M,$ let $T(M)$ be the tensor algebra of $M$, i.e. $
T(M)=\oplus _{i=0}^{\infty }M^{\otimes i}$. We denote by $s^{-1}M$ the desuspension of
$M$, i.e. $(s^{-1}M)_{i}=M_{i+1}$. An element
$s^{-1}a_{1}\otimes ...\otimes s^{-1}a_{n}\in (s^{-1}M)^{\otimes n}$ is denoted by
$[a_{1}|...|a_{n}]$.

Given a   simplicial set $X,$ we may consider the dg coalgebra $(C_\ast Z(X), d_C,\Delta)$ where $d_C$ is the usual boundary map on the chains $C_\ast Z(X)$  and $\Delta$ is the Alexander-Whitney diagonal map. Let $C_{\ast > 0} (X_0)$ denote the sub dg coalgebra of $C_\ast Z(X)$ generated by simplices of positive degree on the subsimplicial set of $Z(X)$ generated by the vertices $X_0=Z(X)_0$, so all generators of $C_{\ast > 0} (X_0)$ are degenerate simplices having degenerate faces.
We may truncate $d_C$ and $\Delta$ to obtain a new dg coalgebra $A:=(A_\ast(X), d_A,\Delta')$  where
 \[ A_\ast(X)=C_\ast Z(X) / C_{\ast > 0} (X_0), \,\,
d_A=d_C-\partial_0-(-1)^{n}\partial_n: A_n\rightarrow  A_{n-1},\]
and $\Delta'$ is $\Delta$ without the primitive  term.
Let $(\Omega A,d_{\Omega})$ be the cobar construction of $A,$ i.e.,
$\Omega A=T(s^{-1} \bar A_*(X))$ is the tensor algebra on the desuspension of $\bar A_*(X) := A_{*>0}(X)$
 with the differential
$d_\Omega=d_{1}+d_{2}$ defined for $\bar{a}\in\bar{A}$ by
\begin{equation*}
d_{1}[\,\bar{a}\,]=-\left[\,\overline{d_{A}(a)}\, \right]
\end{equation*}
and
\begin{equation*}
d_{2}[\, \bar{a}\, ]=
\sum(-1)^{|a^{\prime}|}\left[\, \bar{a^{\prime}}\mid\bar{a^{\prime\prime}}\,\right],\ \ \
\text{for}\ \ \
{\Delta^{\prime}}(a)=\sum a^{\prime}\otimes a^{\prime\prime},
\end{equation*}
extended as a derivation.
Define a submodule
$\Omega'_n A\subset \Omega _nA$
as generated by monomials $[\bar a_1|\cdots| \bar a_k]\in \Omega_n A,\,k\geq 1,  $   where each $a_i$ is a simplex in $Z(X)$ representing a generator of $A$ such that
 $\min a_1=\max a_k= x_0 $ and
 $\max a_i=\min a_{i+1}$
for all $i;$ in particular, $\Omega' A= \Omega A$ when $X_0=\{x_0\}$.
 Define the \emph{hat-cobar construction}  $\widehat{\Omega}C_\ast(X)$ of the dg coalgebra $C_\ast(X)$ as
\[  \widehat{\Omega}C_\ast(X)=\Omega' A /\sim ,  \]
where $\sim$ is generated by
\[
[\bar a_1|...|\bar a_{i-1}|\bar a_i|\bar a_{i+1}|\bar a_{i+2}|...|\bar a_k ]\sim [\bar a_1|...|\bar a_{i-1}|\bar a_{i+2}|...|\bar a_k]
\ \  \text{whenever}\ \  a_{i+1}=a_i^{op};
\]
in particular, $[\bar a_i|\bar a_{i+1}]\sim 1_{\Bbbk}.$

For a $1$-reduced $X$ (e.g., $X=\operatorname{Sing}^1(Y,y)$ the simplicial set consisting of all singular simplices in a topological space $Y$ which collapse edges to a fixed point $y \in Y$), the hat-cobar construction $\widehat{\Omega} C_\ast(X)$ coincides with the Adams' cobar construction  $\Omega C_\ast(X)$ of  the dg coalgebra $C_\ast(X)$. We have an obvious

\begin{theorem}\label{hat-cobar} For a simplicial set $X$
the dg algebra  $C_\ast(\widehat{\mathbf{\Omega}} X)$    coincides with the hat-cobar construction $\widehat{\Omega}C_\ast(X).$
\end{theorem}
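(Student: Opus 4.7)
The plan is to construct an explicit isomorphism of dg algebras $\Phi\colon \widehat{\Omega}C_\ast(X)\to C_\ast(\widehat{\mathbf{\Omega}} X)$ by matching generators, and then verify compatibility with the product, the differential and the defining relations on both sides. On generators, I would send
\[
\Phi\bigl[\,\bar a_1\mid\cdots\mid \bar a_k\,\bigr]\;\longmapsto\;\bar a_1\bar a_2\cdots \bar a_k,
\]
where the right-hand side denotes the class of the corresponding word in $\widehat{\mathbf{\Omega}}_n X$ under the explicit description recalled in Section 3.5. Because $\widehat{\mathbf{\Omega}}X$ is a monoidal necklical set whose product is concatenation, and because the product on $\widehat{\Omega}C_\ast(X)$ is the tensor-algebra product inherited from $\Omega A$, it is immediate that $\Phi$ is a morphism of graded algebras. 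The bijection on generators in each bidegree $(n,k)$ follows from the explicit descriptions in Section 3.5: monomials of length $k$ and total cubical dimension $n$ in $\widehat{\mathbf{\Omega}}_{n,k}X$ correspond bijectively to tensors $[\bar a_1|\cdots|\bar a_k]\in\Omega' A$ with $\sum(|a_i|-1)=n$, first vertex $x_0$ and last vertex $x_0$.

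Next I would verify that $\Phi$ respects the defining relations. The relation in $\widehat{\mathbf{\Omega}}X$ identifying $\bar\sigma_i\cdot\bar\sigma_{i+1}$ with $\overline{s_0(x)}$ whenever $\sigma_{i+1}=\sigma_i^{op}$ is precisely the relation $[\bar a_i\mid\bar a_{i+1}]\sim 1_{\Bbbk}$ in $\widehat{\Omega}C_\ast(X)$, since after passing to the quotient $C_\ast(\widehat{\mathbf{\Omega}} X)=C'_\ast(\widehat{\mathbf{\Omega}} X)/C'_\ast(D(e))$ the degenerate element $\overline{s_0(x)}$ represents the unit. Similarly, the quotient by $C_{\ast>0}(X_0)$ that defines $A$ from $C_\ast Z(X)$ matches the collapse of those monomials having a bead entirely built from iterated degeneracies of a vertex.

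The heart of the argument is matching the differentials. Writing $d=\sum_{i=1}^{n}(-1)^i(d^1_i-d^0_i)$ for the cubical differential on $C_\ast(\widehat{\mathbf{\Omega}} X)$, the explicit formulas in Section 3.5 for $d^1_i$ and $d^0_i$ act on a monomial $\bar a_1\cdots \bar a_k$ by applying
\[
d^1_i(\bar\sigma)=\overline{\partial_i(\sigma)},\qquad
d^0_i(\bar\sigma)=\overline{\partial_{i+1}\cdots\partial_{m+1}(\sigma)}\cdot\overline{\partial_0\cdots\partial_{i-1}(\sigma)}
\]
to the unique bead containing the $i$-th cubical coordinate, and leaving the rest alone. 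The first of these formulas, summed with the sign convention above, reproduces the truncated simplicial boundary $d_A=d_C-\partial_0-(-1)^{n}\partial_n$ on each factor (the terms $\partial_0$ and $\partial_n$ are exactly those whose outputs land in $C_{\ast>0}(X_0)$, hence vanish after passing to $A$), which is the derivation $d_1$ of $\Omega A$. The second formula, summed with signs, reproduces the Alexander-Whitney coproduct $\Delta'$ on each factor in the form $d_2[\bar a]=\sum(-1)^{|a'|}[\bar{a'}\mid\bar{a''}]$. Thus under $\Phi$ we have $\Phi\circ d_\Omega = d\circ \Phi$.

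The main obstacle will be the bookkeeping of signs: one must show that the signs $(-1)^i$ attached to $d^{1}_i$ and $d^{0}_i$, combined with the Leibniz expansion of the derivation $d$ across a concatenation, reproduce exactly the sign $(-1)^{|a'|}$ appearing in $d_2$ and the overall sign of $d_1$ on each bead of the word. This is a routine but careful computation based on the bidegrees $n_{(r)}$ introduced in Section 3.1 and the Alexander-Whitney sign convention. Once the signs check out, $\Phi$ is a bijective morphism of dg algebras, proving the isomorphism claimed in the theorem.
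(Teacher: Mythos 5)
The paper gives no written proof of this theorem; the phrase ``We have an obvious'' signals that the authors regard the identification as immediate from the explicit descriptions of $\widehat{\mathbf{\Omega}}X$ in Section~3.5 and of the hat-cobar construction in Section~4.1. Your strategy---build the generator-by-generator bijection $[\bar a_1|\cdots|\bar a_k]\mapsto \bar a_1\cdots\bar a_k$, observe that it is monoidal since both products are concatenation, check that it sends relations to relations, and match the cubical differential $\sum_i(-1)^i(d^1_i-d^0_i)$ against $d_1+d_2$---is precisely the argument implicit in the paper, so your approach is essentially the intended one.

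There is, however, a genuine error in one step of your explanation. You account for the absence of the $\partial_0$ and $\partial_n$ terms in $d_A$ by asserting that ``the terms $\partial_0$ and $\partial_n$ are exactly those whose outputs land in $C_{\ast>0}(X_0)$, hence vanish after passing to $A$.'' That is not true: for a general nondegenerate $n$-simplex $\sigma$ the faces $\partial_0\sigma$ and $\partial_n\sigma$ are arbitrary $(n-1)$-simplices, not degenerate simplices supported on vertices, so they do not lie in $C_{\ast>0}(X_0)$. The actual reason these terms are absent is structural. The cubical face maps $d^1_i$ of the necklical cube $Y(\Delta^n)$ are indexed by $i=1,\dots,n-1$ and correspond only to the inner coface maps $\partial_1,\dots,\partial_{n-1}$; the faces $\partial_0$ and $\partial_n$ move the first or last vertex of the bead and therefore do not arise from morphisms of type $(ia)$ in $Nec$ at all. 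Correspondingly, the paper defines $d_A=d_C-\partial_0-(-1)^n\partial_n$ by explicitly truncating, not as a consequence of any quotient. The quotient by $C_{\ast>0}(X_0)$ serves the separate purpose you identified correctly in your second paragraph: it matches the quotient $C'_\ast(\widehat{\mathbf{\Omega}}X)/C'_\ast(D(e))$ that kills beads which are iterated degeneracies of the basepoint. Once you replace the faulty parenthetical with this structural observation, the matching of differentials (and hence the whole proof) goes through as you outlined.
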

Note that the similar theorem is true for $C^N_\ast(\widehat{\mathbf{\Omega}} X)$ and the \emph{normalized}  hat-cobar construction
$\widehat{\Omega}^NC_\ast(X)$ obtained from the definition of the hat-cobar construction by replacing $X_0$ by $X_0\cup D(X).$

From Theorems \ref{loopmodel} and \ref{hat-cobar} and the homotopy invariance of the based loop space we have
\begin{proposition}
If $f_\ast:C_\ast(X)\rightarrow C_\ast(X')$ is induced by a weak equivalence $f: X\rightarrow X',$
then $\widehat{\Omega}f_\ast:\widehat{\Omega}C_\ast(X) \rightarrow \widehat{\Omega}C_\ast(X') $ is a quasi-isomorphism.
\end{proposition}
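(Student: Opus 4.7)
The plan is to combine Theorem \ref{loopmodel} and Theorem \ref{hat-cobar} with the standard fact that the based loop space functor preserves weak homotopy equivalences between path connected spaces, and then pass from spaces to chain complexes.

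First, I would observe that the assignment $X\mapsto \widehat{\mathbf{\Omega}} X$ is functorial on $Set_{\Delta}^{0}$: a simplicial map $f:X\rightarrow X'$ sends words $\bar\sigma_1\cdots \bar\sigma_k$ to $\overline{f(\sigma_1)}\cdots\overline{f(\sigma_k)}$ and respects the defining relations (including inversion of $1$-simplices), so it induces a monoidal map of necklical sets $\widehat{\mathbf{\Omega}}f:\widehat{\mathbf{\Omega}}X\to \widehat{\mathbf{\Omega}}X'$ and a continuous monoidal map $|\widehat{\mathbf{\Omega}}f|:|\widehat{\mathbf{\Omega}}X|\to |\widehat{\mathbf{\Omega}}X'|$. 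Inspecting the inductive construction of the comparison map $\omega$ in the proof of Theorem \ref{loopmodel}, the lifts $g_{\sigma^n}$ for $X'$ can be chosen to extend those for $X$ along $\widehat{\mathbf{\Omega}}f$ (on those cells coming from the image), so we obtain a diagram
\[
\begin{array}{ccc}
|\widehat{\mathbf{\Omega}}X| & \overset{\omega}{\longrightarrow} & \Omega |X| \\
|\widehat{\mathbf{\Omega}}f|\downarrow & & \hspace{-0.1in}\Omega|f|\downarrow \\
|\widehat{\mathbf{\Omega}}X'| & \overset{\omega'}{\longrightarrow} & \Omega |X'|
\end{array}
\]
commuting up to homotopy, with $\omega,\omega'$ homotopy equivalences by Theorem \ref{loopmodel}.

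Next, since $f$ is a weak equivalence of simplicial sets, $|f|:|X|\to |X'|$ is a weak homotopy equivalence of path connected spaces, so $\Omega|f|$ is also a weak homotopy equivalence. By the homotopy commutative diagram above, $|\widehat{\mathbf{\Omega}}f|$ is a weak homotopy equivalence as well. Therefore the induced map on singular chains is a quasi-isomorphism. Since the paper notes that $C_{\ast}(\widehat{\mathbf{\Omega}}X)$ calculates the singular homology of $|\widehat{\mathbf{\Omega}}X|$ (via the standard comparison between cubical and singular chains on a CW complex built from cubes), the map of cubical chain complexes $C_{\ast}(\widehat{\mathbf{\Omega}}X)\to C_{\ast}(\widehat{\mathbf{\Omega}}X')$ induced by $\widehat{\mathbf{\Omega}}f$ is a quasi-isomorphism.

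Finally, I would apply Theorem \ref{hat-cobar} to identify this map of chain complexes with $\widehat{\Omega}f_{\ast}:\widehat{\Omega}C_{\ast}(X)\to \widehat{\Omega}C_{\ast}(X')$, concluding the proof. The main obstacle is the naturality step: the construction of $\omega$ in Theorem \ref{loopmodel} proceeds by non-canonical inductive lifts, so a bit of care is required to exhibit homotopy commutativity of the comparison square above. This can be handled either by choosing the lifts for $X'$ compatibly with those for $X$ on the image of $f$ (extending cell by cell as in the proof of Proposition \ref{lift}), or, alternatively, by bypassing naturality entirely: once one knows from Theorem \ref{loopmodel} that $|\widehat{\mathbf{\Omega}}X|\simeq \Omega|X|$ and $|\widehat{\mathbf{\Omega}}X'|\simeq \Omega|X'|$, the five lemma applied to the long exact sequences of the quasi-fibrations of Proposition \ref{quasi} for $X$ and $X'$, together with the fact that $|f|$ is a weak equivalence, forces $|\widehat{\mathbf{\Omega}}f|$ to induce isomorphisms on all homotopy groups and hence, by the Hurewicz and Whitehead theorems applied to the universal cover and standard arguments, on integral homology.
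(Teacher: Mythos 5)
Your proposal is correct and takes essentially the same route as the paper, which gives the proposition only a one-line justification citing Theorem \ref{loopmodel}, Theorem \ref{hat-cobar}, and homotopy invariance of the based loop space. Your observation that the naturality of $\omega$ (constructed via non-canonical inductive lifts) can be sidestepped by applying the five lemma to the commutative ladder of long exact sequences of the quasi-fibrations $|\widehat{\mathbf{\Omega}}X|\to|\widehat{\mathbf{P}}X|\to|X|$ and $|\widehat{\mathbf{\Omega}}X'|\to|\widehat{\mathbf{P}}X'|\to|X'|$ (the ladder itself being strictly commutative by functoriality of $\widehat{\mathbf{\Omega}}$ and $\widehat{\mathbf{P}}$) is the clean way to handle the one genuine subtlety that the paper's terse proof leaves implicit.
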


\begin{remark}
\normalfont It follows from Remark 3 that for a path connected topological space $Y$ the dg algebras $C_*(\mathbf{\Omega} \operatorname{Sing}(Y,y) )$ and $C^N_*(\mathbf{\Omega} \operatorname{Sing}(Y,y) )$ are quasi-isomorphic to the dg algebra of singular chains on $\Omega Y$. Moreover, $C_*(\mathbf{\Omega} \operatorname{Sing}(Y,y) )$ and $C^N_*(\mathbf{\Omega} \operatorname{Sing}(Y,y) )$ are isomorphic to the cobar construction on the dg coalgebras $C_*(\operatorname{Sing}(Y,y))/C_{*>0}(y)$ and $C^N_*(\operatorname{Sing}(Y,y) )$, respectively. Thus Adams' classical cobar construction provides a dg algebra model for the based loop space of a space $Y$ - even when $Y$ is non-simply connected- when applied to a Kan complex model $Y$ such as $\operatorname{Sing}(Y,y)$. This fact was also recently stated and proved by the first author and M. Zeinalian in \cite{Rivera- Zeinalian} using results from J. Lurie's theory of $\infty$-categories.
\end{remark}

\subsection{The hat-cobar construction of a simplicial  set X with a single vertex}
Let  $(A,d_A,\Delta)$ be a dg coalgebra such that the module of cycles $Z_1(A)\subset A_1$ is free  with basis $\mathcal{Z}_1.$
Let $G_1$ be the free group generated by $\mathcal{Z}_1$ and let $\Bbbk[G_1]$ be the group ring.
Define a graded module $A[1]$ as
  $A[1]_0=A_0$, $A[1]_1=\Bbbk[G_1]$ and $A[1]_i=A_i$ for $i\geq 2.$  Then $A\subset A[1]$ extends  to the dg coalgebra
 $(A[1], d, \Delta)$ (with $d(A[1]_1)=0$).

Define the hat-cobar construction  $(\widehat{\Omega} A,d_{\hat \Omega})$ of $A$  as
the standard cobar construction
${\Omega} A[1]$ of $A[1]$ modulo the relations $[\,\bar 1_{G_1}]= 1_{\Bbbk}$ and
\begin{multline*}
[\bar a_1|...|\bar a_{i-1}|\bar a_i|\bar a_{i+1}|\bar a_{i+2}|...|\bar a_k ]= [\bar a_1|...|\bar a_{i-1}|\overline {a_i a_{i+1}}\,|\bar a_{i+2}|...|\bar a_k]
\ \  \text{whenever}\\  a_{i},a_{i+1}\in G_1.
\end{multline*}

Then given a   simplicial set $X$ with $X_0$ to be a singleton  the hat-cobar construction of $X$ is
$\widehat{\Omega} A$ for the dg coalgebra $A=(C_\ast(X), d_A,\Delta)$ where $d_A=-\partial_1+\cdots +(-1)^{n-1}\partial_{n-1}: A_n\rightarrow  A_{n-1}$. In this case we recover the \textit{extended cobar construction} introduced in \cite{Hess- Tonks}.

\bibliographystyle{plain}

\begin{thebibliography}{99}


\bibitem[Ada52]{Adams}
J. Adams,
{On the cobar construction,} \textit{Proc. Nat. Acad. Sci. U.S.A}. \textbf{42} (1956), 409-412.

\bibitem[AdHi55]{Adams- Hilton}
J. Adams and  P.J. Hilton,
{On the chain algebra of a loop space,} \textit{Comment. Math. Helv}. \textbf{30} (1955), 305-330.

\bibitem[BH81]{Brown- Higgins}
R. Brown and P.J. Higgins,
{On the algebra of cubes,}
\textit{J. Pure Appl. Algebra,} \textbf{21} (1981), 233-260.

\bibitem[DoTh58]{Dold- Thom}
A. Dold and  R. Thom,
{Quasifaserungen und unendliche symmetrische Produkte,} \textit{Ann. of Math.} \textbf{67} (1958), 239-281.

\bibitem[DS11]{Dugger- Spivak}
D. Dugger and D. I. Spivak,
{Rigidification of quasi-categories,}
\textit{Algebr. Geom. Topol.} \textbf{11}(1) (2001), 225-325

\bibitem[HT10]{Hess- Tonks}
K. Hess and A. Tonks,
{The loop group and the cobar construction,}
\textit{Proc. Amer. Math. Soc.} \textbf{138} (2010), 1861-1876.

\bibitem[KaSa05]{Saneblidze- Kadeishvili}
T. Kadeishvili and S. Saneblidze,
{A cubical model of a fibration,}
\textit{J. Pure Appl. Algebra,} \textbf{196} (2005), 203-228.

\bibitem[RiZe16]{Rivera- Zeinalian}
M. Rivera and M. Zeinalian,
{Cubical rigidification, the cobar construction, and the based loop space,}
to appear on \textit{Algebraic and Geometric Topology} arXiv:1612.04801.

\end{thebibliography}

 \Addresses

\end{document}